\newtheorem{theorem}{\sc Theorem}[section]
\newtheorem{lemma}[theorem]{\sc Lemma}
\begin{document}
\author[P. Shumyatsky]{Pavel Shumyatsky}
\address{Department of Mathematics, University of Brasilia, 70910-900 Bras\'ilia DF, Brazil}
\email{pavel@unb.br}

\title[Groups with many elements of prime power order]{Profinite groups in which many elements have prime power order}

\keywords{Profinite groups, centralizers}
\subjclass[2010]{20E18}
\thanks{This research was supported by CNPq and FAPDF}

\maketitle

\begin{abstract}
The structure of finite and locally finite groups in which every element has prime power order (CP-groups) is well known. In this paper we note that the combination of our earlier results with the available information on the structure of finite CP-groups yields a detailed description of profinite groups with that property (Theorem \ref{main0}). Then we deal with two generalizations of profinite CP-groups.

\noindent Theorem \ref{main1}. A profinite group $G$ is virtually pro-$p$ for some prime $p$ if and only if for each nontrivial $x\in G$ there is a prime $p$ (depending on $x$) such that $C_G(x)$ is virtually pro-$p$. 

\noindent Theorem \ref{main2}. Let $G$ be a profinite group in which each element has either finite or prime power (possibly infinite) order. Then $G$ is either torsion or virtually pro-$p$ for some prime $p$.
\end{abstract}

\section{Introduction}

Groups all of whose elements have prime power order for brevity are called CP-groups. Of course these are precisely the groups in which the centralizer of every nontrivial element consists of elements of prime power order. G. Higman considered finite soluble CP-groups in 1957. He showed that any such group is a $p$-group, or Frobenius, or 2-Frobenius, and its order has at most two distinct prime divisors \cite{higman}. Later Suzuki determined all nonabelian finite simple CP-groups \cite[Theorem 16]{suzuki}. A detailed description of finite CP-groups was given by Brandl in \cite{brandl} while that of locally finite CP-groups by A. L. Delgado and Yu Fen Wu in \cite{delgado} (see also Heineken \cite{heineken}).

Regarding profinite groups, it is customary to say that an element $x$ of a profinite group $G$ has prime power order if the image of $x$ has prime power order in every finite continuous homomorphic image of $G$ (see \cite[p. 32]{rz} for a formal definition of the order of a profinite group). Throughout this paper we say that a profinite group has a property virtually if it has an open subgroup with that property. In \cite{acn} we studied profinite groups in which all centralizers of nontrivial elements are pronilpotent. It was shown that such groups are virtually pronilpotent. Now, combining this with the available information on finite CP-groups it is easy to deduce the following theorem.
\bigskip

\begin{theorem}\label{main0} Let $G$ be a profinite CP-group. Then $G$ is virtually pro-$p$ for some prime $p$. If $G$ is infinite and $P$ is the maximal open normal pro-$p$ subgroup in $G$, then either $G=P$ or   one of the following occurs.
\begin{enumerate}
\item $G$ is a profinite Frobenius group whose kernel is $P$ and complement is either cyclic of prime power order or a (generalized) quaternion group.
\item There is an odd prime $q\neq p$ such that $G/P$ is a Frobenius group with cyclic kernel of $q$-power order and cyclic complement of $p$-power order. In this case $G$ is a so-called $3$-step group (see \cite[p. 401]{go}).
\item $G/P$ is isomorphic to one of the four simple groups $L_2(4)$, $L_2(8)$, $Sz(8)$, $Sz(32)$ and $P$ is (topologically) isomorphic to a Cartesian sum of natural modules for $G/P$.
\end{enumerate}
\end{theorem}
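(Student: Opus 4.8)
The plan is to reduce the profinite statement to the known finite classification by a compactness/inverse-limit argument, combined with the result from \cite{acn} quoted in the introduction. Since $G$ is a CP-group, the centralizer of every nontrivial element consists of elements of prime power order, and in particular is a pronilpotent group (a profinite group all of whose elements have prime power order is pronilpotent, because it is an inverse limit of finite CP-groups and finite CP-groups whose order is a prime power are nilpotent; more carefully, a centralizer that is itself a CP-group need not be nilpotent, so the cleanest route is to invoke the main theorem of \cite{acn} directly). Thus by the theorem of \cite{acn} the group $G$ is virtually pronilpotent, and I would first upgrade ``virtually pronilpotent'' to ``virtually pro-$p$'': an open normal pronilpotent subgroup $N$ is the Cartesian product of its Sylow subgroups, and if two distinct primes $r,s$ divided the orders occurring in $N$ one could produce an element of order $rs$ inside the abelian-by-finite structure of $N$, contradicting the CP-property. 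This establishes the first assertion that $G$ is virtually pro-$p$.

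Next I would pass to the finite quotients. Let $P$ be the maximal open normal pro-$p$ subgroup; such a subgroup exists because $G$ is virtually pro-$p$ and one takes the pro-$p$ radical. The quotient $G/P$ is then a profinite CP-group with no nontrivial normal pro-$p$ subgroup, and I want to show it is finite. Writing $G/P=\varprojlim (G/P)/U$ over open normal $U/P$, each finite quotient is a finite CP-group with trivial $O_p$, so Brandl's classification in \cite{brandl} together with Suzuki's list \cite{suzuki} applies and forces each such finite quotient into one of a bounded list of shapes (Frobenius, $2$-Frobenius, or one of the simple groups $L_2(4), L_2(8), Sz(8), Sz(32)$). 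The key point is that these shapes are rigid enough that the inverse limit stabilizes, so $G/P$ is itself finite and isomorphic to one of the listed configurations.

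With $G/P$ identified as one of the finite CP-groups on Brandl's list, the remaining work is to read off the three cases and to describe the pro-$p$ kernel $P$ as a module. In the Frobenius case the complement of a finite Frobenius CP-group is cyclic of prime power order or generalized quaternion (this is classical), giving case (1); the $2$-Frobenius configuration gives the $3$-step group of case (2) with the indicated cyclic kernel and complement; and the simple quotients give case (3). For case (3) the structure of $P$ as a Cartesian sum of natural modules requires analyzing how $G/P$ acts on the successive quotients $P_i/P_{i+1}$ of a filtration of $P$ by open normal subgroups: each such quotient is an $\mathbb{F}_p[G/P]$-module on which the CP-property restricts the possible composition factors, and one shows the only modules that can occur are copies of the natural module, after which the extension splits appropriately to yield the Cartesian sum.

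The main obstacle I expect is the module-theoretic analysis in case (3): controlling the pro-$p$ group $P$ as a $G/P$-module and proving it is precisely a Cartesian sum of natural modules, rather than merely having natural composition factors. This requires combining the CP-condition on elements of $G$ (which constrains fixed points and the action of elements of $G/P$ of order $p$ on $P$) with cohomological vanishing to split the relevant extensions; establishing the Cartesian (rather than finite direct) sum structure is where the profinite, as opposed to finite, nature of the problem genuinely enters.
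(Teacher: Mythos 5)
Your overall strategy coincides with the paper's: invoke the main theorem of \cite{acn} to get virtual pronilpotency (noting that in a CP-group the centralizer of a nontrivial element $x$ of order $p^k$ is in fact a pro-$p$ group, hence pronilpotent), observe that an open normal pronilpotent subgroup must be a pro-$p$ group since a Cartesian product of nontrivial Sylow subgroups for two primes contains an element of composite order, and then reduce to the classification of finite CP-groups. Two remarks on the middle part: the finiteness of $G/P$ does not need your ``rigidity of shapes'' argument --- once $G$ is virtually pro-$p$, the maximal normal pro-$p$ subgroup $P$ contains an open normal pro-$p$ subgroup and is therefore itself open, so $G/P$ is finite for free. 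Also, the paper first splits into the prosoluble and non-prosoluble cases (using Higman for the former and noting $p=2$ in the latter), which your sketch elides.

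The genuine gap is in case (3). You correctly identify the description of $P$ as a Cartesian sum of natural modules as the hard point, but you then propose to establish it by a module-theoretic analysis of the factors $P_i/P_{i+1}$ combined with cohomological vanishing to split the extensions --- and you do not carry this out; you only name it as an obstacle. This analysis is both unexecuted and unnecessary. The paper's route is different and closes the gap without any cohomology: for each open normal subgroup $N$ of $G$ contained in $P$, the finite quotient $G/N$ is itself a nonsoluble finite CP-group with nontrivial Fitting subgroup, so the Delgado--Wu classification \cite{delgado} applies directly to $G/N$ (not merely to quotients of $G/P$) and already yields that $P/N$ is a direct sum of natural modules for $G/P$; since $G=\varprojlim G/N$, the subgroup $P$ is then topologically a Cartesian sum of natural modules. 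In other words, the splitting you were planning to prove is part of the finite classification being quoted, and the only genuinely profinite input is the inverse-limit passage. As written, your proposal leaves case (3) unproved.
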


Our main objective in this paper is to study profinite groups satisfying certain CP-conditions. The paper contributes to the line of research dealing with profinite groups in which the centralizers of nontrivial elements have prescribed structure (cf. \cite{ppz,acn,abac}). In particular, we consider groups $G$ in which for every nontrivial element $x\in G$ there is a prime $p$, possibly depending on $x$, such that the centralizer $C_G(x)$ is virtually pro-$p$. It turns out that if $G$ is infinite then there is a unique prime $p$ such that the centralizer $C_G(x)$ is virtually pro-$p$ for all $x\in G$.

\begin{theorem}\label{main1} A profinite group $G$ is virtually pro-$p$ for some prime $p$ if and only if for each nontrivial $x\in G$ there is a prime $p$ such that $C_G(x)$ is virtually pro-$p$. 
\end{theorem}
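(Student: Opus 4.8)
The forward implication is routine. If $H\le G$ is an open pro-$p$ subgroup and $x\neq 1$, then $C_H(x)=H\cap C_G(x)$ is open in $C_G(x)$ and, being a closed subgroup of the pro-$p$ group $H$, is itself pro-$p$; hence $C_G(x)$ is virtually pro-$p$. All the content lies in the converse, and I may assume $G$ is infinite, since if $G$ is finite the trivial subgroup is an open pro-$p$ subgroup and there is nothing to prove. I also record one observation used repeatedly: if $C_G(x)$ is infinite then the prime $p=p(x)$ with $C_G(x)$ virtually pro-$p$ is unique, because a nontrivial profinite group cannot be simultaneously pro-$p$ and pro-$q$ for $p\neq q$ (all its finite quotients would be both $p$- and $q$-groups, hence trivial), whereas an open subgroup of an infinite group is infinite.

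The plan is to produce an open normal pro-$p$ subgroup of $G$, and the first, decisive, step is to show that $G$ is virtually pronilpotent. Each $C_G(x)$ is virtually pro-$p(x)$, hence virtually pronilpotent; what I want is the profinite analogue, for the ambient group, of the statement that a group with pronilpotent centralizers is virtually pronilpotent, which is exactly what is established in \cite{acn}. The main obstacle of the whole argument is precisely to bridge the gap between the hypothesis available here (centralizers virtually pronilpotent) and the hypothesis of \cite{acn} (centralizers pronilpotent): one must either reduce, by passing to a suitable open subgroup, to the case in which the relevant centralizers are genuinely pronilpotent, or appeal to the corresponding virtual strengthening of the result of \cite{acn}. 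Granting this, I fix an open normal pronilpotent subgroup $K$ of $G$ and write it as the Cartesian product $K=\prod_r K_r$ of its Sylow pro-$r$ subgroups, in which distinct factors commute elementwise.

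With $K$ in hand the remaining argument is elementary. Suppose two factors $K_p$ and $K_q$ with $p\neq q$ are infinite; by a theorem of Zelmanov each contains an infinite abelian subgroup, say $A_p\le K_p$ and $A_q\le K_q$. Choosing $1\neq a\in A_p$, the centralizer $C_G(a)$ contains the abelian group $A_p$ and also contains $K_q\supseteq A_q$, so it contains both an infinite pro-$p$ and an infinite pro-$q$ subgroup, forcing $p(a)=p$ and $p(a)=q$ at once, a contradiction. Hence at most one factor $K_r$ is infinite. If some $K_p$ is infinite, take $1\neq a\in A_p$ as before: then $C_G(a)$ is virtually pro-$p$ with an open normal pro-$p$ subgroup $P_0$, while the complementary product $D=\prod_{r\neq p}K_r$ lies in $C_G(a)$ and is pro-$p'$, so $D\cap P_0=1$ and $D$ embeds in the finite group $C_G(a)/P_0$; thus $D$ is finite and $K=K_p\times D$ is virtually pro-$p$. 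If instead every $K_r$ is finite while $K$ is infinite, pick $1\neq a$ in some nontrivial factor $K_{r_0}$: then the infinite group $D'=\prod_{r\neq r_0}K_r$ lies in $C_G(a)$, which is virtually pro-$s$ for some $s$, so $D'$ meets the open pro-$s$ subgroup of $C_G(a)$ in an infinite pro-$s$ subgroup, which is impossible because every Sylow subgroup of $D'$ is finite. Therefore $K$, and with it $G$, is virtually pro-$p$ for a single prime $p$; together with the forward implication and the uniqueness observation this also recovers the stated fact that, for infinite $G$, one and the same prime $p$ works for every nontrivial element.
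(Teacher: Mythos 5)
There is a genuine gap, and it sits exactly at the point you flag yourself. Your ``decisive step'' --- that $G$ is virtually pronilpotent --- is simply granted, not proved, and it is essentially the whole content of the theorem (once $G$ has an open normal pronilpotent subgroup, your concluding paragraph is correct and elementary). The result of \cite{acn} requires the centralizers of all nontrivial elements to be \emph{pronilpotent}; your hypothesis only gives \emph{virtually} pro-$p$, hence virtually pronilpotent, centralizers. Neither of your two suggested bridges works as stated: passing to an open subgroup $H\leq G$ replaces $C_G(x)$ by $C_H(x)=H\cap C_G(x)$, which is again only virtually pro-$p$ --- there is no single open subgroup that makes all centralizers simultaneously pronilpotent, since the finite ``bad part'' of $C_G(x)$ varies with $x$ and need not be trapped in any fixed open normal subgroup; and the ``corresponding virtual strengthening'' of \cite{acn} is not something you can cite --- it is a statement of roughly the same depth as the theorem you are proving, and the paper does not take that route.

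The paper's actual argument avoids \cite{acn} entirely for this theorem and works directly with Sylow subgroups. It first shows (Lemma \ref{infpq2}) that an infinite pro-$p'$ subgroup cannot be normalized by an infinite pro-$p$ subgroup, using Lemma \ref{resh} to rule out a $\Bbb Z_p$-action and the coprime-action fact Lemma \ref{cc}(vi) to rule out an elementary abelian $p^2$-action; then (Lemmas \ref{finsyl} and \ref{infsy}) that there is exactly one prime $p$ with infinite Sylow $p$-subgroups, via a Frattini-argument construction of a locally finite $q$-subgroup acting on an open subgroup of a Sylow $p$-subgroup. Virtual prosolubility then comes from Feit--Thompson or Lemma \ref{prost}, and the open normal pro-$p$ subgroup is obtained as an open normal subgroup meeting a (necessarily finite) Hall $p'$-subgroup trivially. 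If you want to salvage your outline, you would need to supply a proof of virtual pronilpotency of $G$ from the stated hypothesis --- which in practice means redoing something like the paper's Lemmas \ref{infpq2}--\ref{infsy} anyway.
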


Next, we consider profinite groups in which all non-torsion elements have prime power order. This is equivalent to saying that the centralizers of elements of infinite order in $G$ are pro-$p$ groups. Obvious examples of such groups are profinite torsion groups and pro-$p$ groups. Further examples are provided by profinite Frobenius groups whose kernel is a pro-$p$ group. We do not know whether there are other types of examples of groups in question.

\begin{theorem}\label{main2} Let $G$ be a profinite group in which each element has either finite or prime power (possibly infinite) order. Then $G$ is either torsion or virtually pro-$p$ for some prime $p$.
\end{theorem}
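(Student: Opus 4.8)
The plan is to reduce Theorem~\ref{main2} to Theorem~\ref{main1}. First I would record the reformulation promised in the discussion preceding the statement: the hypothesis is equivalent to the assertion that $C_G(x)$ is pro-$p$ whenever $x$ has infinite order (where $p$ is the prime with $x$ of order $p^\infty$). To prove the nontrivial implication, suppose $x$ has infinite order and that some $y\in C_G(x)$ has order divisible by a prime $q\neq p$. Inside the procyclic group $\overline{\langle y\rangle}$ the Sylow pro-$q$ subgroup is nontrivial, so it contains a nontrivial $q$-element $w$, and $w$ commutes with $x$. The abelian profinite group $\overline{\langle x,w\rangle}$ is the direct product of its Sylow subgroups; here $x$ lies in the pro-$p$ component and $w$ in the pro-$q$ component, so $xw$ has order divisible by both $p$ and $q$ and is of infinite order. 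Thus $xw$ is a non-torsion element of non-prime-power order, contrary to hypothesis. Hence every nontrivial element of $C_G(x)$ has $p$-power order; and a profinite group all of whose elements have $p$-power order is pro-$p$ (otherwise some prime $q\neq p$ divides its order, and then a nontrivial Sylow pro-$q$ subgroup supplies a nontrivial $q$-element). Therefore $C_G(x)$ is pro-$p$.

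Now assume $G$ is not torsion and fix an element $x$ of infinite order, so that $C_G(x)$ is pro-$p$. The strategy is to verify the hypothesis of Theorem~\ref{main1}, namely that $C_G(g)$ is virtually pro-$\ell$ for some prime $\ell=\ell(g)$ for every nontrivial $g\in G$. Theorem~\ref{main1} then shows that $G$ is virtually pro-$\ell$ for a single prime $\ell$; since a suitable power of our fixed element $x$ lies in an open pro-$\ell$ subgroup and still has infinite order, that power has order $\ell^\infty$, while being a power of $x$ it has $p$-power order, so $\ell=p$ and $G$ is virtually pro-$p$ as required. For $g$ of infinite order the needed statement is exactly the reformulation: $C_G(g)$ is pro-$\ell(g)$. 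It therefore remains to treat the torsion elements, and this is where the real work lies.

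Thus the main obstacle is to show that if $g$ is a nontrivial torsion element of the non-torsion group $G$, then $C_G(g)$ is virtually pro-$p$. I would split this according to whether $C_G(g)$ contains an element of infinite order. If it does, say $z\in C_G(g)$ with $z$ of infinite order, then $g\in C_G(z)$, which is pro-$p$ by the reformulation, so $g$ is a $p$-element; the presence of the infinite-order element $z$ inside the closed subgroup $C_G(g)$ should then let one rerun the preceding analysis on $C_G(g)$ and conclude that it too is virtually pro-$p$. The genuinely hard case is when $C_G(g)$ is torsion: a priori it could be an infinite torsion profinite group that is not virtually pro-$p$, and this must be excluded. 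The model to keep in mind is a profinite Frobenius group with pro-$p$ kernel, where $g$ lies in a complement and $C_G(g)$ is a conjugate of that complement; there the statement amounts to proving that the complement is finite. I expect this to require Zelmanov's theorem that a torsion profinite group is locally finite, combined with the structural description of finite CP-groups underlying Theorem~\ref{main0}, used to show that an infinite torsion centralizer would force, in some finite continuous quotient, a configuration producing a non-torsion element of non-prime-power order, contradicting the hypothesis. Controlling this torsion — equivalently, proving that the Sylow pro-$p$ subgroup of $G$ is open — is the crux of the whole argument; once it is in hand, Theorem~\ref{main1} delivers the conclusion.
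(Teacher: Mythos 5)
Your reduction to Theorem \ref{main1} is logically coherent, and the easy pieces are fine: the reformulation (centralizers of infinite-order elements are pro-$p$) is exactly the one the paper records, and your argument for it via the Sylow decomposition of $\overline{\langle x,w\rangle}$ is correct. But the proposal has a genuine gap precisely where you acknowledge "the real work lies": you never prove that the centralizer of a nontrivial torsion element is virtually pro-$\ell$, and this case carries essentially the entire difficulty of the theorem. Your sub-case where $C_G(g)$ contains an infinite-order element is circular as written ("rerun the preceding analysis on $C_G(g)$" just reproduces the same unproved torsion case one level down, with no induction parameter), and your sketch for the torsion case points in a direction that does not work: since torsion elements of $G$ may have arbitrary composite finite order, the finite continuous quotients of $G$ need not be CP-groups, so the structure theory behind Theorem \ref{main0} does not apply to them; moreover a finite quotient contains no non-torsion elements, so no "configuration producing a non-torsion element of non-prime-power order" can be exhibited there without a lifting argument you do not supply. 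A priori an infinite torsion centralizer could be, say, a Cartesian product of finite simple groups of unbounded orders, and excluding this requires real structure theory.

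The paper takes a genuinely different route that never verifies the hypothesis of Theorem \ref{main1}. It first bounds the Fitting height of prosoluble subgroups by $4$ (Lemmas \ref{infpq3} and \ref{44}, using the Frattini argument on a non-torsion Sylow pro-$p$ subgroup together with Lemma \ref{qqq} and Lemma \ref{resh}), then invokes the nonsoluble length bound of Khukhro--Shumyatsky and Wilson's lemmas to produce a characteristic series of length at most $24$ with pronilpotent or Cartesian-product-of-simples factors (Lemma \ref{series}), and finally inducts on the length of that series, using Lemma \ref{cart2}, Herfort's theorem, and a Frobenius-style analysis of a $\Bbb Z_q$ acting on a Cartesian product of simple groups. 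If you want to salvage your approach, you would need to import essentially all of that machinery to handle the torsion-centralizer case, at which point the detour through Theorem \ref{main1} buys nothing.
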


We remark that the combination of Theorem \ref{main2} and Lemma \ref{qqq} (see the next section) implies that if $G$ is not torsion, then there is a prime $p$ such that $G/O_p(G)$ is finite with Sylow subgroups either cyclic or (generalized) quaternion. Here $O_p(G)$ stands for the maximal normal pro-$p$ subgroup of $G$. Hence, the structure of $G/O_p(G)$ is similar to that of a complement in a finite Frobenius group. A detailed description of that structure can be found in Passman \cite[Ch. 3]{passman}.

In the next section we collect auxiliary results required in the proofs of the above theorems. Then the proofs are given in the subsequent sections.

\section{Preliminaries}

Recall that a group is said to locally have a property if every finitely generated subgroup has that property. The next lemma is almost obvious. We include the proof for the reader's convenience. Note that the lemma is no longer true if we drop the assumption that $G$ is residually finite (cf.\ the non-abelian semidirect product of the Pr\"ufer group $C_{2^\infty}$ by the group of order 2). Throughout the paper we denote by $\langle X\rangle$ the (sub)group generated by a set $X$.

\begin{lemma}\label{114} 
Let $G$ be a locally nilpotent group containing an element with finite centralizer. Suppose that $G$ is residually finite. Then $G$ is finite.
\end{lemma}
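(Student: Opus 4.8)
The plan is to use residual finiteness to convert the finite-centralizer hypothesis into a \emph{fixed-point-free} condition on a subgroup of finite index, and then to exploit the fact that in a nilpotent group every nontrivial normal subgroup meets the centre. Write $x$ for the distinguished element, so that $C_G(x)$ is finite. Since $G$ is residually finite and $C_G(x)\setminus\{1\}$ is a finite set, for each $1\ne c\in C_G(x)$ I can choose a normal subgroup $N_c$ of finite index with $c\notin N_c$; intersecting the finitely many $N_c$ produces a normal subgroup $N$ of finite index with $N\cap C_G(x)=1$, that is, $C_N(x)=1$. It then suffices to prove that $N=1$, for in that case $G$ has a trivial subgroup of finite index and hence is finite.

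Next I would suppose, for contradiction, that $N\ne 1$ and pick $1\ne z\in N$. This is where local nilpotence enters: the finitely generated subgroup $Q=\langle x,z\rangle$ is nilpotent. Let $D$ be the normal closure of $z$ in $Q$. Each generator $qzq^{-1}$ (with $q\in Q$) of $D$ lies in $N$, because $z\in N$ and $N$ is normal in $G$, so $D\le N$; moreover $D$ is a nontrivial normal subgroup of the nilpotent group $Q$. Consequently $D\cap Z(Q)\ne 1$: walking up the upper central series of $Q$, if $i$ is least with $D\cap Z_i(Q)\ne 1$, then $[D\cap Z_i(Q),Q]\le D\cap Z_{i-1}(Q)=1$, so $D\cap Z_i(Q)\le Z(Q)$. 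Picking $1\ne w\in D\cap Z(Q)$, I find that $w$ centralizes $x$ (since $x\in Q$ and $w\in Z(Q)$) while $w\in D\le N$; hence $w\in N\cap C_G(x)=1$, a contradiction. Therefore $N=1$ and $G$ is finite.

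I expect the only real subtlety to be the passage from the hypothesis of \emph{local} nilpotence to a usable statement about centres: $G$ itself need not be nilpotent, so the centre argument cannot be applied to $G$ directly, and one must descend to the finitely generated—hence nilpotent—subgroup $Q=\langle x,z\rangle$ in order to manufacture a central element inside $N$. The role of residual finiteness is precisely to supply the finite-index subgroup $N$ on which $x$ acts without nontrivial fixed points, and it is indispensable: the non-residually-finite example $C_{2^\infty}\rtimes C_2$ mentioned just before the lemma satisfies all the other hypotheses yet is infinite. It is worth noting that this argument needs neither local finiteness nor the finite order of $x$, although both hold a posteriori.
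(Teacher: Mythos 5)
Your proof is correct and follows essentially the same route as the paper's: produce a finite-index normal subgroup $N$ with $N\cap C_G(x)=1$ via residual finiteness, then use nilpotence of $\langle x,z\rangle$ to find a nontrivial element of $N$ centralizing $x$. You merely spell out the details (the choice of $N$, and the standard fact that a nontrivial normal subgroup of a nilpotent group meets the centre) that the paper leaves implicit.
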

\begin{proof} Choose $x\in G$ such that $C_G(x)$ is finite. Let $N$ be a normal subgroup of finite index such that $N\cap C_G(x)=1$. Assume that $N\neq1$ and let $1\neq y\in N$. The subgroup $\langle x,y\rangle$ is nilpotent and so the center of $\langle x,y\rangle$ has nontrivial intersection with $N$. This is a contradiction since $N\cap C_G(x)=1$. The result follows.
\end{proof}

We will require the theorems of Wilson \cite{wilson} and Zelmanov \cite{ze} on the structure of compact torsion groups.

\begin{theorem} \label{wilson} (Wilson) Let $G$ be a compact torsion group. Then $G$ has a finite series of characteristic subgroups, in which each factor either is a pro-$p$ group for some prime $p$ or is isomorphic (as a topological group) to a Cartesian product of isomorphic finite simple groups. 
\end{theorem}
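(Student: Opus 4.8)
The plan is to establish the theorem by analysing the finite quotients of $G$ and passing to the inverse limit, with the classification of finite simple groups and the torsion hypothesis supplying the crucial finiteness. Writing $G$ as the inverse limit of its finite quotients $G/N$, where $N$ ranges over the open normal subgroups, the first goal is to produce in each $G/N$ a characteristic series whose factors are either $p$-groups or direct products of isomorphic finite simple groups, and \emph{with length bounded independently of $N$}. Such uniform series then assemble, term by term, into a series of closed characteristic subgroups of $G$ with factors of the required kind.

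For the finite-group step I would use the generalized Fitting subgroup $F^*(H)=F(H)E(H)$, where $F(H)$ is the Fitting subgroup and $E(H)$ the layer, together with the ascending series defined by $F^*_{i+1}(H)/F^*_i(H)=F^*(H/F^*_i(H))$. Each factor of this series is a central product of a nilpotent group with a semisimple group; refining it through the Sylow subgroups of the nilpotent part and through the homogeneous components of the layer yields a characteristic series of $H$ all of whose factors are either abelian $p$-groups or direct products of isomorphic simple groups. This construction is purely finite and routine; what is not routine is the control of its length.

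The heart of the matter is the uniform bound, and here both hypotheses are indispensable. The classification of finite simple groups is used to show that only finitely many isomorphism types of nonabelian simple groups occur among the composition factors of $G$ and to bound the nonabelian contribution to the length. The torsion hypothesis, combined with compactness, is used to show that only finitely many primes divide the order of $G$ and that the generalized Fitting length of the quotients $G/N$ does not grow without bound; both statements genuinely fail for general compact groups, as the Cartesian product $\prod_p C_p$ over all primes $p$ already fails to be torsion. I expect this uniform finiteness to be the main obstacle: one must rule out, using only torsion and compactness, the accumulation of infinitely many primes or an unbounded tower of Fitting-type layers, presumably by extracting from such an accumulation an element of infinite order.

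Finally I would carry out the passage to the limit. For each fixed index $i$ the preimages of $F^*_i(G/N)$ form a compatible family of closed characteristic subgroups of the quotients, whose inverse limit is a closed characteristic subgroup of $G$; the associated factor is the inverse limit of the corresponding finite factors and so is a pro-$p$ group or a Cartesian product of isomorphic finite simple groups, the Cartesian (rather than direct) product appearing because the number of homogeneous components may increase along the inverse system. The uniform length bound guarantees that this series is finite, which completes the argument. The remaining points to check are that each limit subgroup is indeed closed and characteristic and that the semisimple factors are exactly Cartesian products of a single isomorphism type of simple group.
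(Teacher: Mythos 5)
The paper does not prove Theorem \ref{wilson} at all: it is imported verbatim from Wilson's article \cite{wilson}, so there is no internal argument to compare yours against. Judged on its own terms, your proposal is an outline whose decisive step is absent. The two parts you describe in detail are the easy ones: building a refined generalized Fitting series in a single finite quotient is, as you say, routine, and the passage to the inverse limit is essentially Wilson's Lemmas 2 and 3 (which this paper reuses later to prove Lemma \ref{series}). Everything therefore rests on the uniform bound: that $\pi(G)$ is finite, that only finitely many isomorphism types of simple composition factors occur, and above all that the generalized Fitting length of the quotients $G/N$ is bounded independently of $N$. At exactly that point you write that you ``expect'' this to follow from torsion plus compactness, ``presumably by extracting \dots an element of infinite order.'' That sentence is the theorem. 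Since every finite group is a continuous quotient of a compact torsion group (namely of itself), no bound can be read off quotient by quotient; it must be extracted from the coherence of the whole inverse system together with the torsion hypothesis, and this is where Herfort's theorem (finiteness of $\pi(G)$, itself a nontrivial result the paper cites separately as \cite{herfort}) and the real work of Wilson's paper live. Without an argument here the proposal establishes only a reduction, not the result.

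Two secondary points if you pursue this route. First, $F^*$ is only subcompatible with the transition maps: the image of $F^*_i(G/N')$ in $G/N$ is contained in, but in general not equal to, $F^*_i(G/N)$, so the limit ``factors'' are inverse limits along non-surjective systems and need the care that Wilson's lemmas supply. Second, to obtain factors that are Cartesian products of \emph{isomorphic} simple groups while keeping the length bounded, you must also bound the number of isomorphism types appearing, which again needs $\pi(G)$ finite together with the CFSG-based fact that only finitely many finite simple groups have order supported on a given finite set of primes.
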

Remark that the above theorem generalizes an earlier result of Herfort saying that the order of a compact torsion group is divisible by only finitely many primes \cite{herfort}. It is noteworthy that later Herfort discovered that this holds true for any profinite group $G$ in which all elements have order divisible by only finitely many primes \cite{herf2}. In particular the latter result of Herfort implies that the order of any (generalized) profinite CP-group is divisible by only finitely many primes. In the present paper however we will not use this fact.

\begin{theorem}\label{torsion} (Zelmanov) Each compact torsion group is locally finite.
\end{theorem}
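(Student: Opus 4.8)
The plan is to deduce local finiteness from Wilson's structure theorem (Theorem \ref{wilson}) together with the positive solution of the restricted Burnside problem. Since local finiteness is an extension-closed property in the profinite category --- if $N$ is a closed normal subgroup with $N$ and $G/N$ locally finite, then $G$ is locally finite (a finitely generated closed $H\le G$ has $H/(H\cap N)$ finite, whence $H\cap N$ is finitely generated of finite index in $H$ and finite, so $H$ is finite) --- it suffices, given a compact torsion group $G$ with a finite characteristic series as in Theorem \ref{wilson}, to show that each factor of that series is locally finite. I would therefore reduce at once to two cases: a factor that is a pro-$p$ group, and a factor that is (topologically) a Cartesian product of isomorphic copies of a fixed finite simple group $S$. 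In each case it is enough to prove that an arbitrary finitely generated closed subgroup is finite.

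Consider first a factor $M=\prod_{i\in I}S$. Every element has order dividing $\exp(S)$, so $M$ has finite exponent $e=\exp(S)$, and hence so does each finitely generated closed subgroup $H\le M$. Now $H$ is a finitely generated profinite (in particular residually finite) group of exponent dividing $e$, so its finite continuous images form a family of $d$-generated finite groups of exponent dividing $e$, where $d$ bounds the number of generators of $H$. By Zelmanov's solution of the restricted Burnside problem the orders of such finite groups are bounded, whence $H$ has only finitely many open subgroups and is itself finite.

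The pro-$p$ case needs one extra step, because finite exponent is not given a priori. Let $G$ be a finitely generated torsion pro-$p$ group; being finitely generated it is second countable, hence metrizable and complete, so the Baire category theorem applies. Writing $G=\bigcup_{n\ge 1}G_n$ with $G_n=\{g\in G:\ g^{p^n}=1\}$ a closed set, I would conclude that some $G_N$ has nonempty interior, and then upgrade this to the statement that $G$ has finite exponent (up to passing to an open subgroup of finite index, which is irrelevant for finiteness). Once bounded exponent is in hand, the restricted Burnside problem for prime-power exponent again forces the finite images of $G$ to have bounded order, so $G$ is finite, completing the pro-$p$ case and hence the theorem.

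The genuine obstacle is, of course, the restricted Burnside problem itself: the entire weight of the theorem rests on Zelmanov's deep positive solution, which is precisely what guarantees that a finitely generated residually finite group of bounded exponent is finite. By comparison, the extension-closure of local finiteness and the Baire-category reduction to bounded exponent are soft, essentially topological inputs, and the structural decomposition is delegated wholesale to Theorem \ref{wilson}. I expect the one delicate point in writing the pro-$p$ argument carefully to be the passage from ``some $G_N$ has nonempty interior'' to ``$G$ has finite exponent,'' which must be handled via cosets of an open normal subgroup rather than by a naive translation of the (non-subgroup) set $G_N$.
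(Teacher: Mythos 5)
The paper does not actually prove this statement: it is Zelmanov's theorem, imported from \cite{ze} as a black box, so the only question is whether your sketch constitutes a correct proof. It does not. The reduction via Theorem \ref{wilson}, the extension-closure of local finiteness, and the case of a factor $\prod_{i\in I}S$ (finite exponent $\exp(S)$, then the restricted Burnside problem) are all sound. The genuine gap is exactly at the point you flag as ``delicate'' but classify as a soft topological matter: in the pro-$p$ case the Baire category theorem gives you only that the closed set $G_N=\{g:\ g^{p^N}=1\}$ contains a coset $xU$ of an open subgroup $U$, i.e.\ a \emph{coset identity} $(xu)^{p^N}=1$ for all $u\in U$. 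There is no elementary passage from this to ``some open subgroup has finite exponent.'' Knowing that every element of $xU$ has order dividing $p^N$ only expresses each $u\in U$ as a product $x^{-1}\cdot(xu)$ of two elements of bounded order, which bounds nothing; conditions asserting that all elements in a coset (or outside a subgroup) have bounded order are notoriously subtle --- compare the failure of the Hughes conjecture --- and no rearrangement ``via cosets of an open normal subgroup'' repairs this. If the upgrade to finite exponent were soft, Zelmanov's 1992 paper would be an immediate corollary of Wilson's theorem and the restricted Burnside problem, which it is not.

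What actually closes this gap in \cite{ze} is Lie-theoretic: for a finitely generated torsion pro-$p$ group $G$ one passes to the graded Lie algebra $L_p(G)$ of the Zassenhaus filtration; the coset identity supplied by Baire category is linearized into a polynomial identity on $L_p(G)$, torsion of $G$ makes the homogeneous generators ad-nilpotent, and Zelmanov's structure theorems for such Lie algebras (the machinery behind, but strictly beyond, the bare statement of the restricted Burnside problem) force $L_p(G)$ to be nilpotent; hence $G$ is $p$-adic analytic, and a torsion compact $p$-adic analytic group is finite. So your outline is correct in architecture but incomplete at its central step: the pro-$p$ case cannot be delegated to ``Baire plus RBP.''
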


Recall that according to the Hall-Kulatilaka theorem \cite{haku} each infinite locally finite group has an infinite abelian subgroup. Combining this with Theorem \ref{torsion} we deduce

\begin{theorem}\label{infab} Each infinite profinite or locally finite group has an infinite abelian subgroup.
\end{theorem}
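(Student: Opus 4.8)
The plan is to argue by cases, distinguishing first whether $G$ is locally finite or profinite, and then, in the profinite case, whether $G$ is torsion. Two of the three branches will be reduced to the Hall--Kulatilaka theorem \cite{haku}, while the remaining branch is handled directly by exhibiting a procyclic subgroup. To begin, suppose $G$ is an infinite locally finite group. Here there is nothing to prove beyond citing Hall--Kulatilaka, which immediately furnishes an infinite abelian subgroup.

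Next I would treat the profinite case. Suppose first that $G$ is profinite and \emph{torsion}. Since every profinite group is compact, $G$ is then a compact torsion group, so Zelmanov's theorem (Theorem \ref{torsion}) applies and shows that $G$ is locally finite. As $G$ is infinite, the locally finite case just handled yields an infinite abelian subgroup via Hall--Kulatilaka. Suppose instead that $G$ is profinite but \emph{not} torsion. Then there is an element $x\in G$ of infinite order, and I would pass to the closed subgroup $H=\overline{\langle x\rangle}$ that it generates. This $H$ is procyclic, hence abelian, and it is infinite because it contains the infinite cyclic group $\langle x\rangle$; thus $H$ is the desired infinite abelian subgroup. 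This exhausts all possibilities.

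I expect no serious obstacle here: the case division is clean, and each branch invokes a single result. The only genuinely substantive ingredient is Zelmanov's theorem, whose role is precisely to convert the compactness hypothesis in the torsion profinite case into local finiteness, after which every branch collapses either to Hall--Kulatilaka or to the elementary remark that an element of infinite order generates an infinite abelian (procyclic) subgroup.
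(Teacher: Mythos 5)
Your proof is correct and follows essentially the same route as the paper: Hall--Kulatilaka for the locally finite case, Zelmanov's theorem to reduce the torsion profinite case to the locally finite one, and the trivial observation that a non-torsion profinite group contains an infinite (pro)cyclic subgroup. The paper merely compresses this into one line ("combining Hall--Kulatilaka with Theorem \ref{torsion}"), leaving the non-torsion branch implicit, which you have simply spelled out.
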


If $A$ is a group of automorphisms of a group $G$, the subgroup generated by all elements of the form $g^{-1}g^\alpha$ with $g\in G$ and $\alpha\in A$ is denoted by $[G,A]$. It is well known that the subgroup $[G,A]$ is an $A$-invariant normal subgroup in $G$. We write $C_G(A)$ for the centralizer  of $A$ in $G$. The symbol $A^{\#}$ stands for the set of nontrivial elements of the group $A$. By an automorphism of a profinite group we always mean a continuous automorphism. As usual, $\pi(G)$ denotes the set of prime divisors of the order of $G$.

The next lemma is a list of useful facts on coprime actions. Here $|K|$ means the order of a profinite group $K$. For finite groups the lemma is well known (see for example \cite[Ch.~5 and 6]{go}). For infinite profinite groups the lemma follows from the case of finite groups and the inverse limit argument (see \cite[Proposition 2.3.16]{rz} for a detailed proof of item (iii)). Part (vi) is straightforward from claims (iv) and (v).

\begin{lemma}\label{cc}
Let  $A$ be a profinite group of automorphisms of a profinite group $G$ such that $(|G|,|A|)=1$. Then
\begin{enumerate}
\item[(i)] $G=[G,A]C_{G}(A)$.
\item[(ii)] $[G,A,A]=[G,A]$. 
\item[(iii)] $C_{G/N}(A)=NC_G(A)/N$ for any $A$-invariant normal subgroup $N$ of $G$.
\item[(iv)] If $G$ is pronilpotent and $A$ is a noncyclic abelian group, then $G=\prod_{a\in A^{\#}}C_{G}(a)$.
\item[(v)] $G$ contains an $A$-invariant Sylow $q$-subgroup for each prime $q\in\pi(G)$.
\item[(vi)] If $A$ is a noncyclic abelian group and $C_G(a)$ is finite for each $a\in A^{\#}$, then $G$ is finite.
\end{enumerate}
\end{lemma}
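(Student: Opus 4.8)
The plan is to prove each item first for finite groups, where all of (i)--(v) are standard results on coprime action as collected in \cite[Ch.~5 and 6]{go} (and (vi) is vacuous there), and then to lift every assertion to the profinite setting by a single inverse limit argument. To organise the limit uniformly I would pass to the semidirect product $\Gamma$ of $G$ by $A$, which is again a profinite group. Its open normal subgroups $U$ form a base of neighbourhoods of $1$, and for each of them $N=U\cap G$ is an $A$-invariant open normal subgroup of $G$; these $N$ are cofinal among all $A$-invariant open normal subgroups, so $G$ is the inverse limit of the finite groups $G/N$ as $A$-groups. For a fixed such $N$ the image $\bar A$ of $A$ in $\mathrm{Aut}(G/N)$ is finite and its order divides that of $A$, so $(|G/N|,|\bar A|)=1$; thus $(G/N,\bar A)$ is a genuine coprime pair of finite groups to which the classical statements apply.

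Granting this, items (i) and (ii) drop out immediately. From $G/N=[G/N,\bar A]\,C_{G/N}(\bar A)$ for all $N$ one gets $G=[G,A]\,C_G(A)$, once one observes that $[G,A]$ is closed and normal and that the product $[G,A]\,C_G(A)$, being the continuous image of the compact space $[G,A]\times C_G(A)$, is closed; since it surjects onto every $G/N$ it must be all of $G$. The same closedness-by-compactness remark turns $[G/N,\bar A,\bar A]=[G/N,\bar A]$ into (ii). The substantive point is (iii): the inclusion $NC_G(A)/N\subseteq C_{G/N}(A)$ is trivial, while the reverse inclusion says precisely that an $A$-fixed coset contains an $A$-fixed element, which is the nontrivial half of the finite coprime statement. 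Transferring it to the profinite case is exactly where the inverse limit must be handled with care, and this is carried out in \cite[Proposition 2.3.16]{rz}; the key is that the relevant system of fixed-point sets consists of nonempty finite sets, so its inverse limit is nonempty and the map onto $C_{G/N}(A)$ is surjective. Notice also that (iii) is what licences identifying $C_{G/N}(A)$ with the image of $C_G(A)$ in the proof of (i).

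For (iv) I would first use that a pronilpotent $G$ is the Cartesian product of its (characteristic, hence $A$-invariant) Sylow subgroups, reducing to the pro-$q$ case; in each finite quotient the classical theorem for a noncyclic abelian group acting coprimely on a nilpotent group gives generation by the subgroups $C_{G/N}(\bar a)$, and passing to the limit yields $G=\prod_{a\in A^{\#}}C_G(a)$. For (v), each $G/N$ possesses an $\bar A$-invariant Sylow $q$-subgroup by coprime action; the $A$-invariant Sylow $q$-subgroups of the quotients form a nonempty finite inverse system, so compactness produces a nonempty limit, an $A$-invariant pro-$q$ Sylow subgroup of $G$. Finally (vi) follows from (iv) and (v). Replacing $A$ by a finite noncyclic subgroup $A_0$ --- for instance a copy of $C_p\times C_p$, which exists because $A$ is abelian and non-procyclic --- preserves the hypothesis that $C_G(a)$ is finite for every $a\in A_0^{\#}$. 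By (v) pick $A_0$-invariant Sylow $q$-subgroups $G_q$; by (iv) each $G_q$ is the set-product of the finitely many finite subgroups $C_G(a)\cap G_q$ with $a\in A_0^{\#}$, hence finite. Therefore every Sylow subgroup of $G$ is finite, and since the primes dividing $|G|$ all lie in the finite set $\bigcup_{a\in A_0^{\#}}\pi(C_G(a))$ there are only finitely many of them; a profinite group whose order involves finitely many primes and all of whose Sylow subgroups are finite is itself finite.

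The main obstacle throughout is not any single identity but the passage to the limit, concentrated in (iii): one must know that the inverse system of $A$-fixed-point sets has surjective bonding maps, equivalently that fixed cosets survive in the limit, which rests on the compactness of the finite coprime quotients rather than on any purely algebraic manipulation. Once (iii) and the cofinality of the $A$-invariant open normal subgroups are secured, the remaining items are routine consequences of their finite counterparts.
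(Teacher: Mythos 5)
Your proposal is correct and follows essentially the same route as the paper, which likewise reduces to the well-known finite coprime-action statements in Gorenstein and lifts them by an inverse limit argument (citing \cite[Proposition 2.3.16]{rz} for item (iii)), and likewise obtains (vi) directly from (iv) and (v) via $A$-invariant Sylow subgroups. The only caveat is your parenthetical claim that a non-procyclic abelian profinite group must contain a copy of $C_p\times C_p$ (false for, e.g., $\Bbb Z_p\times\Bbb Z_p$), but this is immaterial here since in items (iv) and (vi) $A$ is meant to be a finite noncyclic abelian group, as in all applications in the paper.
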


A proof of the next lemma can be found in \cite{abac}.
\begin{lemma}\label{resh} Let $A$ be an infinite procyclic group acting coprimely on a profinite group $G$. Let $A_i=A^{i!}$ denote the subgroup generated by the $i!$th powers of elements of $A$. Set $G_i=C_G(A_i)$. For each $A$-invariant open normal subgroup $N$ of $G$ there is $i$ such that $G=NG_i$. If there is an index $j$ such that $G_j=G_{j+k}$ for each $k=1,2\dots$, then $G_j=G$.
\end{lemma}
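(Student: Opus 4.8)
The plan is to prove the two assertions separately, the second building on the first, with Lemma \ref{cc}(i) as the only substantial external input. I would begin by recording the monotonicity built into the definitions: since $i!$ divides $(i+1)!$, the closed subgroups $A_i=A^{i!}$ form a descending chain $A_1\supseteq A_2\supseteq\cdots$, and hence the centralizers $G_i=C_G(A_i)$ form an ascending chain $G_1\subseteq G_2\subseteq\cdots$. The one elementary fact I need about the chain $(A_i)$ is that it is cofinal among the open subgroups of $A$: if $A_0\le A$ is open of index $m$, then $A/A_0$ is cyclic of order $m$ (a finite quotient of a procyclic group is cyclic), so $g^m\in A_0$ for every $g\in A$, and therefore $g^{i!}\in A_0$ whenever $i\ge m$; as $A_0$ is closed this yields $A_i=A^{i!}\subseteq A_0$.

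For the first assertion I would fix an $A$-invariant open normal subgroup $N$. Then $G/N$ is finite, the image of $A$ in $\mathrm{Aut}(G/N)$ is finite, and so the kernel $A_0$ of the induced action is open in $A$. By cofinality there is an index $i$ with $A_i\subseteq A_0$; for such $i$ the subgroup $A_i$ acts trivially on $G/N$, which is to say $[G,A_i]\subseteq N$. Since $A_i\le A$ also acts coprimely on $G$, Lemma \ref{cc}(i) applied to $A_i$ gives $G=[G,A_i]\,C_G(A_i)=[G,A_i]\,G_i\subseteq NG_i$, and hence $G=NG_i$, as wanted.

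For the second assertion, assume $G_j=G_{j+k}$ for all $k\ge1$. Combined with the ascending chain this gives $G_i\subseteq G_j$ for every $i$. Now for an arbitrary $A$-invariant open normal $N$ the first assertion furnishes an $i$ with $G=NG_i\subseteq NG_j$, so in fact $G=NG_j$ holds for \emph{every} $A$-invariant open normal subgroup $N$. To conclude $G_j=G$ I would invoke that such subgroups form a base of neighbourhoods of the identity: given an open normal $M\trianglelefteq G$, a compactness (tube-lemma) argument applied to the continuous action shows that $\{\alpha\in A:M^\alpha=M\}$ is open in $A$, so $M$ has only finitely many $A$-conjugates and their intersection is an $A$-invariant open normal subgroup inside $M$. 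Since $G_j$ is closed, $G_j=\bigcap_N NG_j$ with $N$ running over this base, and as each $NG_j=G$ we obtain $G_j=G$.

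I expect the genuinely delicate point to be this last step, and in particular the verification that the $A$-invariant open normal subgroups are cofinal, which is what licenses using them as a neighbourhood base in the identity $G_j=\bigcap_N NG_j$; everything else reduces to finite quotients and a single application of coprimality. It is worth stressing that the factorials serve only to guarantee the cofinality of $(A_i)$, and that the first assertion needs no stabilization hypothesis — it is exactly the stabilization that upgrades the family of approximate factorizations $G=NG_{i(N)}$ into a single factorization through the fixed subgroup $G_j$.
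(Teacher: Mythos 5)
Your proof is correct: the three ingredients you use --- the coprime factorization $G=[G,A_i]C_G(A_i)$ from Lemma \ref{cc}(i), the cofinality of the chain $(A_i)$ among the open subgroups of the procyclic group $A$, and the fact that the $A$-invariant open normal subgroups of $G$ form a neighbourhood base of the identity, so that $G_j=\bigcap_N NG_j=G$ --- are exactly what the lemma requires, and each step (including the tube-lemma argument for the openness of the stabilizer of $M$ in $A$) is sound. Note that the paper itself gives no proof but defers to \cite{abac}, and the argument there is essentially the one you reconstructed (reduction to finite quotients via the coprime-action lemma, e.g.\ in the form of Lemma \ref{cc}(iii), followed by an inverse-limit step), so your route coincides with the paper's.
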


We will also require the following result from \cite{qj} whose proof uses a deep Lie-theoretic result of Zelmanov \cite{ze17}.

\begin{lemma}\label{qqq} Let $p$ be a prime and $A$ the noncyclic group of order $p^2$. Suppose that $A$ acts by automorphisms on a pro-$p'$ group $G$ in such a manner that $C_G(a)$ is torsion for each $a\in A^\#$. Then $G$ is locally finite. In particular, $G$ is torsion.
\end{lemma}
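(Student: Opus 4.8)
The plan is to reduce the whole question to a statement about the restricted Lie algebra associated with a pro-$q$ group and then to feed that into Zelmanov's theorem \cite{ze17}. First I would observe that it suffices to show that every finitely generated $A$-invariant closed subgroup $H$ of $G$ is finite: the closed subgroup generated by the (finite) $A$-orbit of a single element is finitely generated and $A$-invariant, and it inherits the hypotheses because $C_H(a)\le C_G(a)$ is torsion. Since every finitely generated closed subgroup of $G$ lies in such an $H$, this gives local finiteness directly, with ``$G$ is torsion'' as the immediate consequence recorded in the statement. So from the outset I would assume $G$ is topologically finitely generated and aim to prove that $G$ is finite.

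The heart of the argument is the case where $G$ is a pro-$q$ group for a single prime $q\neq p$. Here I would pass to the graded $\mathbb{F}_q$-Lie algebra $L$ associated with $G$ (via the Zassenhaus filtration); the coprime action of $A=C_p\times C_p$ carries over to $L$, and since $p\neq q$ it produces a decomposition $L=\bigoplus_{\chi}L_\chi$ into homogeneous components indexed by the characters of $A$. The rank-two structure of $A$ is decisive: every homogeneous component, including $L_0=C_L(A)$, is contained in $C_L(a)$ for some $a\in A^{\#}$, because each nontrivial character has a kernel of order $p$. The torsion hypothesis on $C_G(a)$ then translates, through the standard correspondence between the order of a group element and the ad-nilpotency of its image, into the assertion that each $C_L(a)$ is nil; hence $L$ is generated by ad-nilpotent elements. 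At this point Zelmanov's theorem \cite{ze17} applies and yields that $L$ is nilpotent. Translating back, $G$ is nilpotent; since $G$ is pro-$q$, hence pronilpotent, Lemma \ref{cc}(iv) gives $G=\prod_{a\in A^{\#}}C_G(a)$ as a product of torsion subgroups, and because the torsion elements of a nilpotent group form a subgroup, $G$ is torsion. A finitely generated torsion nilpotent pro-$q$ group is finite, which settles this case.

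For a general finitely generated pro-$p'$ group $G$ I would use Lemma \ref{cc}(v) to fix $A$-invariant Sylow pro-$q$ subgroups, transfer the previous conclusion so that each of them is locally finite, and then analyse the coprime action of $A$ on the finite quotients of $G$ to promote this local information into finiteness of $G$. Two steps are delicate and constitute the main obstacles. The first is securing the polynomial identity needed to invoke Zelmanov's theorem: being generated by ad-nilpotent elements is not by itself enough, so one must exploit the coprime action and the nil fixed-point subalgebra $C_L(A)$ to produce an identity, and this must be done with no uniform bound on the ad-nilpotency degrees available (the hypothesis gives only that $C_G(a)$ is torsion, not of bounded exponent). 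The second is the passage from the single-prime case to the general pro-$p'$ case, where $G$ need not be pronilpotent; here the finite-quotient analysis of the $C_p\times C_p$-action has to be carried out carefully, and this is the step I expect to cost the most effort. The genuinely deep ingredient throughout is Zelmanov's Lie-theoretic theorem \cite{ze17}, without which the nilpotency of $L$, and hence the finiteness conclusion, would be out of reach.
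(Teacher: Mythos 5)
First, a point of calibration: the paper does not actually prove Lemma~\ref{qqq} --- it imports it verbatim from \cite{qj} --- so your outline can only be measured against that cited source, whose Lie-theoretic route it does resemble (reduction to finitely generated $A$-invariant subgroups, the Zassenhaus graded algebra, the character decomposition with the order-$p$ kernel trick, torsion elements giving ad-nilpotent homogeneous elements, and Zelmanov \cite{ze17}). But as a proof your text has a genuine hole exactly where you yourself flag ``obstacles'': Zelmanov's theorem requires that $L$ satisfy a polynomial identity, and you never produce one --- announcing that ``one must exploit the coprime action \dots to produce an identity'' is not supplying the identity, so the central pro-$q$ case is not closed. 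Moreover, the worry you attach to this step, that there is ``no uniform bound on the ad-nilpotency degrees,'' is based on a missed elementary fact: $C_G(a)$ is a \emph{closed} subgroup of the compact group $G$, and a compact torsion group has finite exponent (Herfort \cite{herfort}; this follows from a Baire category argument, since such a group is the union of the closed subsets of elements of order dividing $n$). So the hypotheses automatically upgrade to centralizers of finite exponent, the ad-nilpotency indices arising in each $C_L(a)$ are uniformly bounded, and the required identity then becomes accessible by the standard device in this literature: an Engel-type identity on the fixed-point subalgebra combined with the Bahturin--Zaicev--Linchenko theorem, which converts a polynomial identity on $C_L(A)$ under a coprime action into a polynomial identity on all of $L$. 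Without some such ingredient your appeal to \cite{ze17} is simply unfounded.

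The second gap is the multi-prime reduction, which you leave as a promissory note (``analyse the coprime action of $A$ on the finite quotients \dots to promote this local information into finiteness''). Since $G$ need not be pronilpotent, Lemma~\ref{cc}(iv) is unavailable globally, and knowing that each $A$-invariant Sylow subgroup from Lemma~\ref{cc}(v) is locally finite does not by itself yield local finiteness of $G$: in the finite quotients one needs an actual generation statement for noncyclic coprime actions (of the type $G=\langle C_G(a):a\in A^{\#}\rangle$) together with bounds uniform over the inverse system in order to pass to the limit, and none of this is sketched. Finally, a local slip in the pro-$q$ case: from nilpotency of the graded algebra $L$ you infer ``$G$ is nilpotent,'' which is not a valid implication; what nilpotency of the Zassenhaus graded algebra of a finitely generated pro-$q$ group gives is boundedness of the dimensions of the graded pieces, hence finite rank and analyticity, after which a torsion group of this kind is finite --- your subsequent detour through Lemma~\ref{cc}(iv) and torsion subgroups of nilpotent groups rests on the unproved nilpotency of $G$. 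In short: the architecture matches \cite{qj}, but the two steps you explicitly defer are precisely the substance of that proof, and one of them dissolves once you notice the finite-exponent upgrade via \cite{herfort}.
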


The next lemma uses the well-known corollary of the classification of finite simple groups that the order of any nonabelian finite simple group is divisible by 3 or 5.

\begin{lemma}\label{prost} Let $H$ be a profinite group in which for all odd primes $p\in\pi(G)$ the Sylow $p$-subgroups are finite. Then $H$ is virtually prosoluble.
\end{lemma}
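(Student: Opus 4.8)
The plan is to bound the number of nonabelian composition factors of the finite continuous quotients of $H$, and then to extract from this bound an open normal prosoluble subgroup. First I would single out the primes $3$ and $5$. By hypothesis the Sylow $3$-subgroup and the Sylow $5$-subgroup of $H$ are finite, say of orders $3^a$ and $5^b$ respectively (taking the exponent to be $0$ if the prime does not divide the order of $H$). Since a Sylow subgroup of any finite quotient $H/N$ is the image of a Sylow subgroup of $H$, the $3$-part of $|H/N|$ is at most $3^a$ and the $5$-part of $|H/N|$ is at most $5^b$, uniformly over all open normal subgroups $N$ of $H$.

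Next I would invoke the quoted corollary of the classification of finite simple groups: every nonabelian finite simple group has order divisible by $3$ or by $5$. Consequently each nonabelian composition factor of $H/N$ contributes at least one to the quantity $(\text{exponent of }3\text{ in }|H/N|)+(\text{exponent of }5\text{ in }|H/N|)$, which by the previous paragraph is at most $a+b$. Hence the number $f(N)$ of nonabelian composition factors of $H/N$, counted with multiplicity, satisfies $f(N)\le a+b$ for every open normal subgroup $N$.

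Then I would exploit monotonicity: if $N\subseteq M$ are open normal subgroups of $H$, then $H/M$ is a quotient of $H/N$, so $f(M)\le f(N)$. Being integer-valued and bounded, $f$ attains a maximum value $m_0$ at some open normal subgroup $N_0$. For any open normal subgroup $N$ of $H$ with $N\subseteq N_0$, the Jordan--H\"older theorem applied to $N_0/N\trianglelefteq H/N$ shows that $f(N)$ equals the number of nonabelian composition factors of $N_0/N$ plus $f(N_0)$; since $f(N)=m_0=f(N_0)$, that first summand is zero, i.e. $N_0/N$ is soluble. Finally, given an arbitrary open normal subgroup $M$ of $N_0$, its core in $H$ is an open normal subgroup $N\subseteq M\subseteq N_0$ of $H$, so $N_0/M$ is a quotient of the soluble group $N_0/N$ and is therefore soluble. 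Thus every finite continuous quotient of $N_0$ is soluble, $N_0$ is prosoluble, and since $H/N_0$ is finite the subgroup $N_0$ is open; hence $H$ is virtually prosoluble.

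I expect the conceptual heart, and the only place where both the finiteness of the odd Sylow subgroups and the CFSG corollary are needed, to be the arithmetic bound showing that $H/N$ has boundedly many nonabelian composition factors. The remaining delicacy is purely organizational: making sure the counting function $f$ is well defined and additive along $N_0/N\trianglelefteq H/N$ via Jordan--H\"older, and passing correctly from open normal subgroups of $H$ to open normal subgroups of $N_0$ by taking cores. Neither of these should present a real obstacle once the uniform bound on $f$ is in place.
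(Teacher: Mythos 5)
Your proof is correct, but it is organized quite differently from the paper's. The paper's entire argument is: since the Sylow $3$- and $5$-subgroups of $H$ are finite, one can choose an open (normal) subgroup $K$ of $H$ whose order is coprime to $3$ and to $5$; by the CFSG corollary quoted just before the lemma, every finite continuous quotient of $K$ then has no nonabelian composition factors at all, so $K$ is prosoluble outright. You instead keep the whole group $H$, bound the number of nonabelian composition factors of each finite quotient $H/N$ by $a+b$ using the same CFSG input, and then run a standard ``bounded integer invariant stabilizes'' argument to locate an open normal $N_0$ below which no further nonabelian factors occur; the Jordan--H\"older additivity and the core argument you supply are both sound, so $N_0$ is prosoluble. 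What the paper's route buys is brevity: once $3$ and $5$ are excluded from the order, prosolubility is immediate and no counting is needed. What your route buys is a slightly more flexible template -- it would apply verbatim whenever one can uniformly bound the number of nonabelian composition factors of the finite quotients, even if no single finite set of primes can be removed -- and it produces an open \emph{normal} prosoluble subgroup by construction. Both proofs rest on exactly the same classification fact, so neither is more elementary than the other in substance.
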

\begin{proof} Choose an open $\{3,5\}'$-subgroup $K\leq G$. We see that $K$ is prosoluble, as required.
\end{proof}

\section{Proof of Theorem \ref{main0}}

This section is quite short. We just briefly explain why Theorem \ref{main0} is fairly straightforward from \cite{acn} and earlier results on finite CP-groups.

Recall that Theorem \ref{main0} deals with a profinite CP-group $G$. Since every element of $G$ has prime power order, it follows that the centralizers of nontrivial elements in $G$ are pronilpotent and so by \cite{acn} the group $G$ is virtually pronilpotent. Assume that $G$ is infinite and observe that the order of the maximal normal pronilpotent subgroup of $G$ is divisible by exactly one prime, say $p$. Therefore $G$ is virtually pro-$p$. Note that the continuous finite images of $G$ are CP-groups.

Let $P$ be the maximal normal pro-$p$ subgroup of $G$. Suppose that $G$ is not a pro-$p$  group (that is, $G\neq P$) and assume first that $G$ is prosoluble. Higman's work \cite[Theorem 1]{higman} immediately implies that $G$ is either a profinite Frobenius group whose kernel is $P$ and complement has prime power order (in which case the complement is either cyclic or quaternion) or there is an odd prime $q\neq p$ such that $G/P$ is a Frobenius group with cyclic kernel of $q$-power order and cyclic complement of $p$-power order. Moreover, if in the latter case $Q$ denotes a Sylow $q$-subgroup of $G$, note that $PQ$ is also a Frobenius group.

We will therefore assume that $G$ is not prosoluble. It was observed that in this case $p=2$ (see for example \cite{acn}; this can also be deduced from the classical results of Suzuki, cf. \cite[Part II, Theorem 1]{suzuki0}). Let $N$ be an open normal subgroup of $G$ contained in $P$. The finite quotient $G/N$ is a nonsoluble finite CP-group whose Fitting subgroup is nontrivial. By \cite{delgado} $G/P$ is isomorphic to one of the four simple groups $L_2(4)$, $L_2(8)$, $Sz(8)$, $Sz(32)$ and $P/N$ is isomorphic to a direct sum of natural modules for $G/P$. Since the group $G$ is an inverse limit of finite groups $G/N$ where $N$ ranges over open normal subgroups contained in $P$, it follows that $G/P$ is isomorphic to one of the four simple groups and $P$ is (topologically) isomorphic to a Cartesian sum of natural modules for $G/P$. The proof is now complete. \hfill $\qed$
\bigskip

An immediate consequence of Theorem \ref{main0} is that if $G$ is a profinite CP-group, then $G$ is either pro-$p$ or virtually nilpotent. This is because a kernel in a profinite Frobenius group is always nilpotent \cite[Corollary 4.6.10]{rz}.

\section{Theorem \ref{main1}}

Obviously, if $G$ is a virtually pro-$p$ group, then every centralizer in $G$ is virtually pro-$p$. We need to prove the converse of this statement.

Thus, in this section $G$ is a profinite group in which for each nontrivial element $x\in G$ there is a prime $p$ (depending on $x$) such that $C_G(x)$ is virtually pro-$p$. Our objective is to show that $G$ is virtually pro-$p$. We write $O_\pi(G)$ for the maximal normal pro-$\pi$ subgroup of $G$. 

\begin{lemma}\label{nilp2} Assume that $G$ is pronilpotent. Then $G$ is virtually pro-$p$ for some prime $p$.
\end{lemma}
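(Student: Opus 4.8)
The plan is to exploit the decomposition of a pronilpotent profinite group as the Cartesian product of its Sylow subgroups. Write $G=\prod_q G_q$, where $G_q$ is the unique (normal) Sylow pro-$q$ subgroup. Since the product is direct, centralizers decompose coordinatewise: for $x=(x_q)_q$ one has $C_G(x)=\prod_q C_{G_q}(x_q)$. I will also use two elementary features of a virtually pro-$p$ profinite group $H$: it has an open pro-$p$ subgroup of finite index, so $\pi(H)$ is finite; and every closed pro-$r$ subgroup of $H$ with $r\neq p$ is finite, as one sees by intersecting it with the open pro-$p$ subgroup.

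First I would show that only finitely many $G_q$ are nontrivial. If $G$ is trivial there is nothing to prove, so fix a prime $q$ with $G_q\neq1$ and a nontrivial $x\in G_q$. Regarding $x$ as an element of $G$ supported in the single coordinate $q$, the decomposition above gives $C_G(x)\supseteq\prod_{r\neq q}G_r$. If infinitely many Sylow subgroups were nontrivial, then $\pi(C_G(x))$ would be infinite, contradicting that $C_G(x)$ is virtually pro-$p$. Hence $\pi(G)$ is finite.

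Next I would prove that at most one $G_q$ is infinite. Suppose not, and let $G_{q_1},G_{q_2}$ be infinite with $q_1\neq q_2$. Take any nontrivial $x\in G_{q_1}$. Then $C_G(x)\supseteq G_{q_2}$, and since $G_{q_2}$ is an infinite pro-$q_2$ subgroup of the virtually pro-$p$ group $C_G(x)$, the prime $p$ attached to $x$ must equal $q_2$. Consequently the pro-$q_1$ subgroup $C_{G_{q_1}}(x)\leq C_G(x)$ is finite. As $x$ was an arbitrary nontrivial element of $G_{q_1}$, every nontrivial element of $G_{q_1}$ has finite centralizer inside $G_{q_1}$. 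But $G_{q_1}$ is an infinite profinite group, so by Theorem \ref{infab} it contains an infinite abelian subgroup $A$; any nontrivial $a\in A$ then satisfies $C_{G_{q_1}}(a)\supseteq A$, which is infinite --- a contradiction. Therefore at most one Sylow subgroup is infinite.

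Combining the two steps, $G$ has finitely many nontrivial Sylow subgroups, of which at most one, say $G_p$, is infinite. Then $\prod_{q\neq p}G_q$ is a finite group, so $G_p$ is open of finite index in $G$, and $G$ is virtually pro-$p$. I expect the decisive step to be the second one: the passage from the hypothesis to the statement that every nontrivial element of the infinite pro-$q_1$ group $G_{q_1}$ has finite centralizer, whose impossibility is exactly what Theorem \ref{infab} rules out.
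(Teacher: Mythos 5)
Your proof is correct and follows essentially the same route as the paper's: decompose the pronilpotent group into the Cartesian product of its Sylow subgroups and use Theorem \ref{infab} to produce an element whose centralizer contains infinite subgroups for two distinct primes, contradicting virtual pro-$p$-ness. The paper packages the argument slightly differently (it first disposes of the case where all Sylow subgroups are finite, then shows $O_{r'}(G)$ is finite for the prime $r$ with infinite Sylow subgroup), but the decomposition and the key lemma are the same.
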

\begin{proof} We can assume that $G$ is infinite. Suppose first that each Sylow subgroup of $G$ is finite. Then $C_G(x)$ is finite for each $x\in G$. This leads to a contradiction because of Theorem \ref{infab}. 

Hence, $G$ has an infinite Sylow $r$-subgroup $R$ for some $r\in\pi(G)$. Write $G=R\times Q$, where $Q=O_{r'}(G)$. Suppose that $Q$ is infinite. By Theorem \ref{infab} $Q$ contains an infinite abelian subgroup $A$. Observe that $R\times A$ is contained in $C_G(x)$ for each $x\in A$. Thus, $C_G(x)$ is not virtually pro-$p$ for any $p\in\pi(G)$, a contradiction. Therefore $Q$ is finite and the lemma follows.
\end{proof}
Throughout the paper we write $\Bbb Z_p$ to denote the infinite procyclic pro-$p$ group. This is of course isomorphic to the additive group of $p$-adic integers.
\begin{lemma}\label{infpq2} Let $p$ be a prime and $K$ an infinite pro-$p'$ subgroup of $G$. The pro-$p$ subgroups of $N_G(K)$ are finite.
\end{lemma}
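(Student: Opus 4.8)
The plan is to argue by contradiction: suppose $N_G(K)$ contains an infinite pro-$p$ subgroup $P$. Since $K$ is pro-$p'$ and $P$ is pro-$p$, conjugation makes $P$ a coprime group of automorphisms of $K$, so the machinery of Lemma \ref{cc} and Lemma \ref{resh} becomes available. Two elementary observations will be used repeatedly: if a profinite group $C$ is virtually pro-$r$ and contains an infinite pro-$p$ subgroup, then necessarily $r=p$; and a pro-$p'$ subgroup of a virtually pro-$p$ group is finite (in each case one intersects with the relevant open pro-$r$, resp. pro-$p$, subgroup). In particular, a group that is virtually pro-$r$ for a single prime $r$ cannot contain both an infinite pro-$p$ subgroup and an infinite pro-$p'$ subgroup.

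First I would reduce to an abelian group of automorphisms. By Theorem \ref{infab} the infinite group $P$ contains an infinite abelian (closed) subgroup $A$, and for every nontrivial $a\in A$ we have $A\le C_G(a)$, so $C_G(a)$ contains an infinite pro-$p$ subgroup and is therefore virtually pro-$p$, whence $C_K(a)=C_G(a)\cap K$ is finite. If $A$ is torsion, then being an infinite abelian pro-$p$ group it is noncyclic and contains a copy $B$ of $C_p\times C_p$; since $C_K(b)$ is finite for each $b\in B^{\#}$, Lemma \ref{cc}(vi) forces $K$ to be finite, a contradiction.

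It remains to treat the case in which $A$ has an element of infinite order; replacing $A$ by the procyclic group it generates, I may assume $A\cong\Bbb Z_p$. Here I would invoke Lemma \ref{resh}: setting $A_i=A^{i!}$ and $K_i=C_K(A_i)$, the fact that for every $A$-invariant open normal $N$ there is $i$ with $K=NK_i$ shows that $\overline{\bigcup_i K_i}$ surjects onto every finite $A$-invariant quotient and hence equals $K$, while the second assertion of Lemma \ref{resh} shows that the chain $K_1\le K_2\le\cdots$ cannot stabilize at a proper term. Now split according to the sizes of the $K_i$. If some $K_i$ is infinite, it is centralized by the infinite pro-$p$ group $A_i$; applying Theorem \ref{infab} to $K_i$ yields an infinite abelian pro-$p'$ subgroup $D\le K_i$, and for any $1\neq e\in D$ the centralizer $C_G(e)$ contains both $D$ (infinite pro-$p'$) and $A_i$ (infinite pro-$p$), which is impossible.

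If instead every $K_i$ is finite, then $L=\bigcup_i K_i$ is an increasing union of finite subgroups whose closure is the infinite group $K$, so $L$ is an infinite locally finite group. By Theorem \ref{infab} it contains an infinite abelian subgroup $D$, and its closure $\overline{D}$ is an infinite abelian pro-$p'$ subgroup of $K$. Choosing $1\neq e\in D$, the element $e$ lies in some $K_m$ and is thus centralized by the infinite pro-$p$ group $A_m$, while $\overline{D}\le C_G(e)$ is an infinite pro-$p'$ subgroup; again $C_G(e)$ cannot be virtually pro-$r$ for any single prime, a contradiction. The main obstacle is precisely this last, procyclic, configuration: the coprime action of $\Bbb Z_p$ on $K$ does not by itself bound $|K|$, so one cannot simply quote Lemma \ref{cc}(vi), and the argument must instead manufacture — through the dichotomy coming from Lemma \ref{resh} together with the Hall--Kulatilaka ingredient of Theorem \ref{infab} — a single element whose centralizer is forced to contain commuting infinite pro-$p$ and pro-$p'$ parts.
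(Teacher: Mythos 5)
Your argument is correct and follows essentially the same route as the paper: reduce to an infinite abelian pro-$p$ subgroup $A$ via Theorem \ref{infab}, note that $C_K(a)$ is finite for $a\in A^{\#}$, dispose of the torsion case with Lemma \ref{cc}(vi), and handle the $\Bbb Z_p$ case by combining Lemma \ref{resh} with Theorem \ref{infab} to produce an element whose centralizer contains commuting infinite pro-$p$ and pro-$p'$ subgroups. The only difference is cosmetic: your sub-case ``some $K_i$ is infinite'' is vacuous, since you have already shown $C_K(a)$ is finite for every nontrivial $a\in A$ and each $K_i$ is contained in such a centralizer.
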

\begin{proof} Suppose that the lemma is false and there is an infinite pro-$p$ subgroup $A$ in $N_G(K)$. In view of Theorem \ref{infab}, without loss of generality we can assume that $A$ is abelian. Since $A$ is infinite, $C_G(a)$ must be virtually pro-$p$ for each nontrivial $a\in A$. It follows that $C_K(a)$ is finite for each nontrivial $a\in A$.

Let us show that $A$ is torsion. Assume that this is false and $A$ contains a subgroup isomorphic to $\Bbb Z_p$. We can simply assume that $A\cong\Bbb Z_p$. Choose a generator $a$ of $A$. Set $a_i=a^{p^i}$ and $K_i=C_K(a_i)$ for $i=1,2,\dots$. We know that all subgroups $K_i$ are finite. By Lemma \ref{resh} the subgroup $H=\bigcup_iK_i$ is an infinite locally finite group. Theorem \ref{infab} tells us that $H$ contains an infinite abelian subgroup $L$. Choose a nontrivial element $l\in L$. There exists an index $i$ such that $l\in K_i$. We see that both subgroups $L$ and $\langle a_i\rangle$ are contained in $C_G(l)$ and so $C_G(l)$ is not virtually pro-$q$ for any prime $q$.

Hence, $A$ is torsion. Since $A$ is infinite, there is an elementary abelian subgroup $B<A$ of order $p^2$. Taking into account that $C_K(b)$ is finite for each nontrivial $b\in B$ we deduce from Lemma \ref{cc} (vi) that $K$ is finite, a contradiction.
\end{proof} 

\begin{lemma}\label{finsyl} Suppose that the Sylow $p$-subgroups of $G$ are finite for each prime $p\in\pi(G)$. Then $G$ is finite.
\end{lemma}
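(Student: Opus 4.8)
The plan is to show that the standing hypothesis of this section forces the centralizer of every nontrivial element to be \emph{finite}, and then to obtain a contradiction from Theorem \ref{infab} under the assumption that $G$ is infinite.

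First I would unwind the running hypothesis of the section. Fix a nontrivial $x\in G$; by assumption there is a prime $p$ with $C_G(x)$ virtually pro-$p$, so $C_G(x)$ contains an open pro-$p$ subgroup $P_0$. Being open in the closed subgroup $C_G(x)$, the subgroup $P_0$ is itself a closed pro-$p$ subgroup of $G$, and hence it is contained in some Sylow pro-$p$ subgroup of $G$. By the hypothesis of the lemma that Sylow subgroup is finite, so $P_0$ is finite; since $P_0$ has finite index in $C_G(x)$, the centralizer $C_G(x)$ is finite. Thus $C_G(x)$ is finite for every nontrivial $x\in G$.

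The remaining step is then immediate. Suppose, for contradiction, that $G$ is infinite. By Theorem \ref{infab} the group $G$ contains an infinite abelian subgroup $A$. Choosing any nontrivial $a\in A$ we have $A\leq C_G(a)$, so $C_G(a)$ is infinite, contradicting the previous paragraph. Hence $G$ is finite.

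The argument is short, and the only point requiring care is the reduction carried out in the second paragraph: one must invoke that every pro-$p$ subgroup of a profinite group is contained in a maximal (Sylow) pro-$p$ subgroup, together with the conjugacy of Sylow pro-$p$ subgroups, in order to pass from the finiteness of a single Sylow $p$-subgroup to the finiteness of $P_0$. Once this standard fact is in place, the finiteness of all centralizers combined with the existence of infinite abelian subgroups supplied by Theorem \ref{infab} closes the proof at once.
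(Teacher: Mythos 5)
Your proof is correct and follows essentially the same route as the paper: the running hypothesis plus the finiteness of all Sylow subgroups forces every centralizer $C_G(x)$ to be finite, and Theorem \ref{infab} then rules out $G$ being infinite. You merely spell out the (standard) intermediate step that an open pro-$p$ subgroup of $C_G(x)$ sits inside a Sylow $p$-subgroup of $G$, which the paper leaves implicit.
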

\begin{proof} Since the centralizers in $G$ are virtually pro-$p$ and since all Sylow $p$-subgroups of $G$ are finite, we conclude that $C_G(x)$ is finite for each nontrivial $x\in G$. In view of Theorem \ref{infab}, $G$ is finite.
\end{proof}
\begin{lemma}\label{infsy} Suppose that $G$ is infinite. There is a unique prime $p\in\pi(G)$ such that the Sylow $p$-subgroups of $G$ are infinite.
\end{lemma}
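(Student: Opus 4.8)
The existence of at least one such prime is immediate: if every Sylow subgroup of $G$ were finite, then Lemma \ref{finsyl} would force $G$ to be finite, contrary to assumption. So I would fix a prime $p$ whose Sylow subgroups are infinite and spend the whole argument on uniqueness. Suppose, for contradiction, that there is a second prime $q\neq p$ with infinite Sylow $q$-subgroups. Choose an infinite Sylow $p$-subgroup $P$; by Theorem \ref{infab} it contains an infinite abelian subgroup $A$. Since $A$ is an infinite pro-$p$ group contained in $C_G(a)$ for every $a\in A^{\#}$, the prime attached to $C_G(a)$ must be $p$, so each $C_G(a)$ is virtually pro-$p$; in particular $C_Q(a)=C_G(a)\cap Q$ is a pro-$q$ subgroup of a virtually pro-$p$ group, hence finite, for every $a\in A^{\#}$ and every Sylow $q$-subgroup $Q$.

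The cleanest way I see to finish is to reduce everything to Lemma \ref{infpq2}. \textbf{It suffices to produce a single $A$-invariant infinite pro-$q$ subgroup} $K$ of $G$. Indeed, $K$ is then an infinite pro-$p'$ subgroup, and because $A$ normalizes $K$ we have $A\leq N_G(K)$; thus $A$ is an infinite pro-$p$ subgroup of $N_G(K)$, directly contradicting the conclusion of Lemma \ref{infpq2} that the pro-$p$ subgroups of $N_G(K)$ are finite. So the entire burden of the uniqueness statement is shifted onto the construction of such a $K$, after which the contradiction is a one-line affair.

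To build $K$ I would exploit that $A$, being pro-$p$, acts coprimely on any $A$-invariant pro-$p'$ subgroup. If some normal (hence $A$-invariant) pro-$p'$ subgroup of $G$ — for instance $O_{p'}(G)$ — already had infinite Sylow $q$-subgroups, then Lemma \ref{cc}(v) would hand me an $A$-invariant infinite Sylow $q$-subgroup of it, and I would be done. The real work is therefore the case in which the $p'$-content visible inside such invariant sections is finite while the ambient Sylow $q$-subgroup $Q$ remains infinite. Here I would fall back on the dichotomy used in the proof of Lemma \ref{infpq2}: splitting according to whether $A$ is torsion (so that it contains a copy of $C_p\times C_p$, to be fed into Lemma \ref{cc}(vi) via the finiteness of the $C_Q(a)$) or contains a copy of $\Bbb Z_p$ (so that the union-of-fixed-points mechanism of Lemma \ref{resh} applies).

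The hard part — and the technical heart of the lemma — is precisely securing the coprime action, i.e. the existence of an infinite $A$-invariant pro-$q$ subgroup. The obstruction is genuine: a $p$-group need not normalize \emph{any} Sylow $q$-subgroup of $G$, so the finiteness of $C_Q(a)$ does not by itself set up a coprime action of $A$ on $Q$, and both Lemma \ref{cc} and Lemma \ref{resh} presuppose such an action. I expect to overcome this by constructing $A$-invariant pro-$q$ subgroups in the finite continuous quotients of $G$, where one can arrange coprimality of the induced action on a suitable section, and then passing to the inverse limit; the delicate point to verify is that the resulting invariant pro-$q$ subgroup is still infinite, which should follow from the assumption that the Sylow $q$-subgroups of $G$ are infinite. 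Once infiniteness survives the limit, the appeal to Lemma \ref{infpq2} closes the argument and establishes uniqueness.
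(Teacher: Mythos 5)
There is a genuine gap. The existence half (via Lemma \ref{finsyl}) and the observation that $C_Q(a)$ is finite for every $a\in A^{\#}$ are fine, but you then shift the entire burden of the proof onto producing an infinite $A$-invariant pro-$q$ subgroup $K$, and you never actually produce one -- you yourself flag this as ``the hard part'' and ``the delicate point to verify.'' The obstruction you identify is real, and neither of your proposed remedies closes it: $O_{p'}(G)$ may well be trivial (so there need be no normal pro-$p'$ subgroup carrying the $q$-part), and in a finite continuous quotient the image of $A$ is just a $p$-subgroup sitting \emph{inside} the quotient, so there is no coprime action to which Lemma \ref{cc}(v) applies and no reason for that image to normalize a $q$-subgroup covering an unboundedly large portion of $Q$; passing to the inverse limit therefore has nothing to start from. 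As written, the argument is incomplete precisely at its critical step.

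The paper circumvents the construction of such a $K$ entirely by reversing the roles of the two primes: instead of making an infinite pro-$p$ group act on an infinite pro-$q$ group, it makes a \emph{finite} noncyclic $q$-group act on an open subgroup of $P$. Concretely, one takes a descending chain of open normal subgroups $N_i$ along which both $P\cap N_i$ and $Q\cap N_i$ strictly decrease, applies the Frattini argument to get $G=N_i\,N_G(P\cap N_i)$, and uses Lemma \ref{infpq2} (with $K=P\cap N_i$, which is an infinite pro-$p'$... rather pro-$q'$ group from the point of view of $q$) to conclude that the Sylow $q$-subgroups $Q_i$ of $N_G(P\cap N_i)$ are finite; choosing them nested, their union $Q_0$ is an infinite locally finite $q$-group. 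Lemma \ref{114} then shows every $a\in Q_0$ has infinite centralizer in $Q_0$, so $C_G(a)$ is virtually pro-$q$ and $C_P(a)$ is finite; finally an elementary abelian $B<Q_0$ of order $q^2$ normalizes an open subgroup of $P$ (since $B$ lies in some $N_G(P\cap N_j)$), and Lemma \ref{cc}(vi) forces $P$ to be finite, a contradiction. You would need to supply an argument of comparable substance in place of your missing construction; the reduction to Lemma \ref{infpq2} alone does not constitute a proof.
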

\begin{proof} Assume that the lemma is false. In view of Lemma \ref{finsyl}, there are two different primes $p$ and $q$ for which the Sylow subgroups of $G$ are infinite. Let $P$ be a Sylow $p$-subgroup and $Q$ a Sylow $q$-subgroup of $G$. Since the subgroups $P$ and $Q$ are infinite, we can choose an infinite chain of open normal subgroups $G=N_1>N_2>\dots$ such that $P\cap N_i>P\cap N_{i+1}$ and $Q\cap N_i>Q\cap N_{i+1}$ for each $i=1,2,\dots$. 

Set $P_i=P\cap N_i$ and $H_i=N_G(P_i)$. The Frattini argument shows that $G=N_iH_i$ for each $i=1,2,\dots$. Lemma \ref{infpq2} implies that the Sylow $q$-subgroups $Q_i$ of $H_i$ are finite. Obviously, the subgroups $Q_i$ can be chosen in such a way that $Q_1\leq Q_2\leq\dots$. Let $Q_0=\cup_iQ_i$ and $N=\cap_iN_i$. The choice of the chain $G=N_1>N_2>\dots$ guarantees that $Q\cap N$ has infinite index in $Q$. Therefore $Q_0$ is an infinite (abstract) subgroup of $G$. Further, we note that $Q_0$ is locally nilpotent and so, by Lemma \ref{114}, $C_{Q_0}(a)$ is infinite for each $a\in Q_0$. It follows that $C_G(a)$ is virtually pro-$q$ for each $a\in Q_0$ and therefore $C_P(a)$ is finite. Since $Q_0$ is infinite, there is an elementary abelian subgroup $B<Q_0$ of order $q^2$. The construction of $Q_0$ guarantees that $B$ normalizes an open subgroup $P^*$ of $P$. Taking into account that $C_P(b)$ is finite for each nontrivial $b\in B$ we deduce from Lemma \ref{cc} (vi) that $P$ is finite, a contradiction.
\end{proof}

The remaining part of the proof of Theorem \ref{main1} will be easy.
\begin{proof}[Proof of Theorem \ref{main1}] Recall that $G$ is a profinite group in which for each $x\in G$ there is a prime $p$ (depending on $x$) such that $C_G(x)$ is virtually pro-$p$. We need to prove that $G$ is virtually pro-$p$. Note first that $G$ is virtually prosoluble. Indeed, by Lemma \ref{infsy}, there is at most one prime $p\in\pi(G)$ such that the Sylow $p$-subgroups of $G$ are infinite. If the Sylow 2-subgroups are finite, then $G$ has an open normal 2$'$-subgroup which is prosoluble by the Feit-Thompson theorem \cite{fetho}. If $G$ has infinite Sylow 2-subgroups, then it is  virtually prosoluble by Lemma \ref{prost}. 

Thus, we know that $G$ is virtually prosoluble. It is sufficient to prove the theorem in the case where $G$ is prosoluble. Recall that in view of Lemma \ref{finsyl} there is exactly one prime  $p\in\pi(G)$ for which the Sylow $p$-subgroups of $G$ are infinite. 

Let $H$ be a Hall $p'$-subgroup of $G$. Lemma \ref{finsyl} shows that $H$ is finite. The group $G$ contains an open normal subgroup $N$ such that $N\cap H=1$. It follows that $N$ is a pro-$p$ group. Therefore $G$ is virtually pro-$p$, as required.
\end{proof}

\section{Theorem \ref{main2}}

In this section we prove Theorem \ref{main2}. Thus, our goal is to show that if $G$ a profinite group in which each element has either finite or prime power (possibly infinite) order, then  $G$ either is torsion or virtually pro-$p$ for some prime $p$. In particular, if $G$ is finitely generated, then $G$ is virtually pro-$p$.

\begin{lemma}\label{nilp2} Assume that the group $G$ in Theorem \ref{main2} is pronilpotent. Then $G$ either is torsion or pro-$p$ for some prime $p$.
\end{lemma}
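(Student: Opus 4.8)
The plan is to exploit the structure of a pronilpotent profinite group as the Cartesian product of its Sylow subgroups, $G=\prod_r G_r$, in which elements lying in distinct factors commute. Assuming $G$ is not torsion, I would fix an element $x$ of infinite order and determine precisely which Sylow factors it meets. The whole argument is essentially a bookkeeping exercise with (supernatural) orders, carried out by projecting onto the individual Sylow subgroups.

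First I would record that, $x$ being of infinite order, the hypothesis forces $x$ to have prime power order, so that $\overline{\langle x\rangle}$ is pro-$p$ for a single prime $p$: if the images of $x$ in two finite continuous quotients had orders that were powers of distinct primes, then passing to the quotient by the intersection of the two kernels would yield an image whose order is divisible by two primes, contradicting prime power order. Writing $x=(x_r)_r$ with $x_r\in G_r$, the projection $G\to G_r$ is a continuous homomorphism, so $\overline{\langle x_r\rangle}$ is simultaneously pro-$p$ (a continuous image of $\overline{\langle x\rangle}$) and pro-$r$; for $r\neq p$ this forces $x_r=1$. Hence $x\in G_p$, and since $x$ has infinite order, $G_p$ is infinite and $\overline{\langle x\rangle}\cong\Bbb Z_p$.

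Next I would show that no other Sylow factor can survive, which gives $G=G_p$. Suppose, for contradiction, that $G_q\neq1$ for some prime $q\neq p$, and choose $1\neq y\in G_q$. Since $x$ and $y$ lie in different Sylow subgroups of the pronilpotent group $G$, they commute, and the product $z=xy$ satisfies $\pi_p(z)=x$ and $\pi_q(z)=y$, where $\pi_p,\pi_q$ are the projections onto $G_p,G_q$. Thus $\overline{\langle z\rangle}$ surjects onto $\overline{\langle x\rangle}\cong\Bbb Z_p$, so $z$ has infinite order, while it also surjects onto the nontrivial $q$-group $\overline{\langle y\rangle}$, so the order of $z$ is divisible by $q$ as well as by $p$. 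Therefore $z$ has infinite order that is not a prime power, contradicting the hypothesis of Theorem \ref{main2}. Consequently $G_r=1$ for every $r\neq p$, so $G=G_p$ is pro-$p$, as required.

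I do not expect a serious obstacle here: once the Sylow decomposition is available the argument is short. The only points that require care are the two order computations — verifying that an element meeting two distinct Sylow factors cannot have prime power order, and that $z=xy$ genuinely inherits an infinite $p$-part from $x$ and a nontrivial $q$-part from $y$. Both reduce to the fact that projection onto each Sylow subgroup is a continuous homomorphism, so that the relevant finite continuous images of $z$ pick up contributions from both $p$ and $q$.
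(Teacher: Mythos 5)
Your proof is correct and takes essentially the same route as the paper. The paper's (very short) proof observes that a non-torsion pronilpotent $G$ contains $A\cong\Bbb Z_p$, that the hypothesis forces $C_G(A)$ to be pro-$p$, and that pronilpotency then kills every other Sylow factor since it centralizes $A$ — which is exactly the content of your explicit $z=xy$ computation, just phrased via centralizers instead of projections.
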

\begin{proof} Assume that $G$ is not torsion. It follows that $G$ contains a subgroup $A$ isomorphic to $\Bbb Z_p$ for some prime $p$. By the hypothesis, the centralizer $C_G(A)$ must be a pro-$p$ group. Since $G$ is pronilpotent, we deduce that $G$ is a pro-$p$ group. Hence the result.
\end{proof}

\begin{lemma}\label{cart2} Assume that the group $G$ in Theorem \ref{main2} is topologically isomorphic to a Cartesian product of finite simple groups. Then $G$ has finite exponent.
\end{lemma}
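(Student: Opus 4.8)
The plan is to work directly with the Cartesian product structure $G=\prod_i S_i$ and to exploit that in such a product one may prescribe the components of an element completely independently. The key preliminary observation is that for $x=(x_i)\in G$ the (supernatural) order of $x$ is computed componentwise: for every prime $q$ one has $v_q(|x|)=\sup_i v_q(|x_i|)$, since the finite continuous quotients of $G$ are precisely the finite subproducts. Consequently the hypothesis on $x$ forbids exactly one thing, namely that $|x|$ be simultaneously divisible by two distinct primes and infinite (i.e.\ not a finite natural number). I would reduce at once to the case where $G$ is infinite, since a finite product of finite groups is finite and trivially of finite exponent, and I record that $G$ has finite exponent if and only if the exponents $\exp(S_i)$ are bounded, because a common bound $N$ gives $x^{N!}=1$ for all $x$.

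First I would show that $\pi(G)=\bigcup_i\pi(S_i)$ is finite. If it were infinite, I would build recursively a sequence of distinct indices $i_1,i_2,\dots$ together with primes $p_k\in\pi(S_{i_k})$ not occurring in $\pi(S_{i_1})\cup\dots\cup\pi(S_{i_{k-1}})$; at each step such an index exists because the already-used primes form a finite set while $\pi(G)$ is infinite. Using Cauchy's theorem I pick $x_{i_k}\in S_{i_k}$ of order $p_k$, set all other components trivial, and obtain $x\in G$ whose order is divisible by the infinitely many distinct primes $p_k$. Then $|x|$ is neither finite nor a prime power, contradicting the hypothesis. Hence $\pi(G)$ is finite.

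Next, assume for contradiction that $\exp(S_i)$ is unbounded. Since $\pi(G)$ is now finite, some prime $q\in\pi(G)$ satisfies $\sup_i v_q(\exp(S_i))=\infty$, so there are infinitely many indices $i$ carrying a $q$-element of order $q^{\alpha}$ with $\alpha\ge 2$. Every such factor is nonabelian: an abelian finite simple group is cyclic of prime order and contains no element of order $q^2$, while a nontrivial $p$-group has nontrivial centre and so is simple only when cyclic of prime order. Fix one such factor $S_a$; being nonabelian simple it is not a $q$-group, so I may choose a prime $q'\in\pi(S_a)$ with $q'\ne q$ and an element $w\in S_a$ of order $q'$. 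I then choose infinitely many further indices $i_2,i_3,\dots$, distinct from $a$, together with elements $z_k\in S_{i_k}$ of $q$-power order $q^{\alpha_k}$ with $\alpha_k\to\infty$. Setting $x_a=w$, $x_{i_k}=z_k$, and all remaining components trivial, I obtain $x\in G$ with $v_q(|x|)=\sup_k\alpha_k=\infty$ and $v_{q'}(|x|)=1$. Thus $|x|$ is divisible by the two distinct primes $q$ and $q'$ and is infinite, again contradicting the hypothesis. Therefore $\exp(S_i)$ is bounded and $G$ has finite exponent.

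The step I expect to be the genuine obstacle is the second one. The naive attempt---taking an element of unbounded $q$-power order in the factors---fails, because an element of order $q^{\infty}$ is a legitimate prime-power element and violates nothing. The decisive point is that unbounded exponent must be \emph{concentrated} at a single prime $q$ (this is exactly where the finiteness of $\pi(G)$ from the first step is used), and that the factors responsible for it are necessarily nonabelian and hence carry a second prime $q'$; contaminating the infinite-$q$-order element with a single $q'$-component produces an order that is both infinite and divisible by two primes. Verifying that the supernatural order of an element of a Cartesian product is computed componentwise, and that the constructed tuples are bona fide elements of the full product, are the only routine points to be checked carefully.
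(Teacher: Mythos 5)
Your argument is correct and rests on the same idea as the paper's proof: build a tuple whose components have unbounded $q$-power order and then contaminate one coordinate with an element of a second prime $q'$, producing an element of infinite order that is not of prime power order. The paper reaches the same punchline more directly (and without your preliminary step that $\pi(G)$ is finite): it takes components $a_i$ of unbounded order, observes that the hypothesis applied to $x=(a_i)$ itself already forces $x$ to be a $p$-element for a single prime $p$, and then swaps a $p'$-element into the first coordinate.
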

\begin{proof} We can assume that $G$ is infinite. Write $G=\prod_{i\in\Bbb N} S_i$ where $S_i$ are nonabelian finite simple groups. Suppose that the lemma is false. We can choose an element $x=(a_1,a_2,\dots,a_i,\dots)$ with $a_i\in S_i$ such that the orders of the elements $a_i$ are unbounded. Thus, by hypotheses, $x$ must be a $p$-element. Now choose a $p'$-element $b\in S_1$ and set $y=(b,a_2,\dots,a_i,\dots)$. We see that $y$ has infinite order and is not a $p$-element, contrary to the hypotheses.
\end{proof}

We will require the following analogue of Lemma \ref{infpq2}.

\begin{lemma}\label{infpq3} Assume the hypotheses of Theorem \ref{main2}. Let $p$ and $q$ be distinct primes and $K$ a non-torsion pro-$p$ subgroup of $G$. The pro-$q$ subgroups of $N_G(K)$ are finite, either cyclic or (generalized) quaternion.
\end{lemma}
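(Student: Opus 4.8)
The plan is to analyse an arbitrary pro-$q$ subgroup $A$ of $N_G(K)$ via the coprime action of $A$ on the pro-$p$ group $K$, exploiting that $K$, being non-torsion, contains elements of infinite $p$-power order whose centralizers are pro-$p$ by the hypothesis of Theorem \ref{main2}. The decisive first observation is that $C_K(a)$ is torsion for every nontrivial $a\in A$. Indeed, if $C_K(a)$ contained an element $x$ of infinite order, then $x$ would have infinite $p$-power order, so $C_G(x)$ would be a pro-$p$ group; but $x$ commutes with $a$, so $a$ would be a nontrivial $q$-element of the pro-$p$ group $C_G(x)$, which is impossible. Hence $C_K(a)$ is torsion for all $a\in A^{\#}$.

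Granting this, I would next rule out a subgroup $B\cong C_q\times C_q$ inside $A$. Such a $B$ acts coprimely on the pro-$q'$ group $K$ with $C_K(b)$ torsion for each $b\in B^{\#}$, so Lemma \ref{qqq} would force $K$ to be torsion, contradicting the hypothesis on $K$. Thus $A$ contains no copy of $C_q\times C_q$; in particular every finite subgroup of $A$ has a unique subgroup of order $q$ and is therefore cyclic or generalized quaternion by the classical theorem on finite $q$-groups. This yields the structural part of the conclusion as soon as finiteness is established.

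It remains to prove that $A$ is finite, and this is where I expect the real work. Assume $A$ is infinite. Theorem \ref{infab} furnishes an infinite abelian subgroup, whose closure $C$ is an infinite closed abelian pro-$q$ subgroup. Since $A$ has no $C_q\times C_q$, the socle $C[q]$ has order $q$, and an abelian pro-$q$ group whose socle has order $q$ is procyclic (by the structure theory of abelian pro-$q$ groups); being infinite, $C\cong\Bbb Z_q$. Fix a generator $a$ of $C$. As $a$ has infinite order, $C_G(a)$ is pro-$q$, so $C_K(a)=1$ because $K$ is pro-$p$; and the same holds for every nontrivial power of $a$. Applying Lemma \ref{resh} to $C$ acting on $K$, we obtain $C_K(C^{i!})=1$ for all $i$, whence the stabilisation clause of that lemma gives $K=C_K(C^{j!})=1$, contradicting that $K$ is infinite. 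Therefore $A$ is finite, and combined with the previous paragraph it is cyclic or generalized quaternion. The main obstacle is precisely this finiteness step: the whole argument hinges on producing, via the socle analysis, a genuine infinite-order element of $A$ whose fixed-point-free action on $K$ can be fed into Lemma \ref{resh}.
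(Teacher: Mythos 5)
Your overall strategy coincides with the paper's: the two engines are exactly Lemma \ref{qqq} (to exclude a subgroup $C_q\times C_q$, after observing that $C_K(b)$ is torsion because an infinite-order element of the pro-$p$ group $K$ has pro-$p$ centralizer) and Lemma \ref{resh} (to exclude a copy of $\Bbb Z_q$ acting fixed-point-freely on $K$). The paper merely orders the steps differently: it first supposes the Sylow $q$-subgroup $Q$ of $N_G(K)$ is non-torsion, extracts a subgroup isomorphic to $\Bbb Z_q$ directly and kills it with Lemma \ref{resh}; then, knowing $Q$ is torsion and hence locally finite by Theorem \ref{torsion}, it excludes $C_q\times C_q$ by Lemma \ref{qqq} and concludes that a locally finite $q$-group with no noncyclic subgroup of order $q^2$ is finite, cyclic or generalized quaternion.

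The one step of yours that is wrong as stated is the structural claim in your finiteness argument: ``an abelian pro-$q$ group whose socle has order $q$ is procyclic.'' This is false: $\Bbb Z_q\times C_q$ has socle of order $q$ and is not procyclic. Fortunately you do not need $C\cong\Bbb Z_q$, only that $C$ contains a copy of $\Bbb Z_q$, i.e.\ that $C$ is not torsion, and this does follow from the absence of $C_q\times C_q$: an infinite torsion abelian pro-$q$ group has finite exponent (Baire category, or Theorem \ref{wilson}), and the injections $C[q^{i+1}]/C[q^{i}]\rightarrow C[q]$ induced by $x\mapsto x^{q^{i}}$ show that a finite socle would force $C$ itself to be finite; hence an infinite $C$ without $C_q\times C_q$ must contain an element of infinite order, whose closed span is the required $\Bbb Z_q$. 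With that repair your argument is complete and the remaining application of Lemma \ref{resh} is exactly as in the paper. Note that the paper sidesteps this issue entirely by testing whether $Q$ is torsion at the very start rather than at the end.
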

\begin{proof} Let $Q$ be a Sylow $q$-subgroup of $N_G(K)$. Suppose first that $Q$ is a non-torsion and so $Q$ contains a subgroup $A\cong\Bbb Z_p$. By the hypotheses, $C_G(a)$ must be a pro-$q$ group for each $1\neq a\in A$. In particular $C_K(a)=1$ for each $1\neq a\in A$. Because of Lemma \ref{resh} this is a contradiction.

Thus, $Q$ is torsion and so, by Theorem \ref{torsion}, $Q$ is locally finite. Suppose that $Q$ contains an elementary abelian subgroup $B$ of order $q^2$. The hypotheses imply that $C_K(b)$ is torsion for each nontrivial $b\in B$. Now Lemma \ref{qqq} tells us that $K$ is torsion, a contradiction.

Hence, $Q$ is a locally finite $q$-group which does not have noncyclic subgroups of order $q^2$. It follows that $Q$ is finite, either cyclic or (generalized) quaternion.
\end{proof} 

Recall that the Fitting height of a finite soluble group $G$ is the length $h(G)$ of a shortest series of normal subgroups all of whose quotients are nilpotent. By the Fitting height of a prosoluble group $G$ we mean the length $h(G)$ of a shortest series of normal subgroups all of whose quotients are pronilpotent. Note that in general a prosoluble group does not necessarily have such a series. The parameter $h(G)$ is finite if, and only if, $G$ is an inverse limit of finite soluble groups of bounded Fitting height. The following lemma is taken from \cite{acn}.

\begin{lemma}\label{44} Let $p$ be a prime and $G$ a finite soluble group in which for every prime $q\neq p$ the Sylow $q$-subgroups are cyclic or generalized quaternion. The Fitting height of $G$ is at most $4$.
\end{lemma}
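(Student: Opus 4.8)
The plan is to control the Fitting height through the $p$-length and the $p'$-length and then add the two contributions. In any finite soluble group the upper $p$-series $1\le O_{p'}(G)\le O_{p',p}(G)\le O_{p',p,p'}(G)\le\dots\le G$ has every factor equal to a $p$-group or a $p'$-group, hence nilpotent; writing $l_p(G)$ and $l_{p'}(G)$ for the $p$-length and the $p'$-length, the number of nonidentity factors is at most $l_p(G)+l_{p'}(G)$, so $h(G)\le l_p(G)+l_{p'}(G)$. It therefore suffices to prove $l_p(G)\le 2$ and $l_{p'}(G)\le 2$, which would yield $h(G)\le 4$.

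For the bound on $l_{p'}(G)$ I would read off the structure of a Hall $p'$-subgroup $H$ of $G$, all of whose Sylow subgroups are cyclic or generalized quaternion. If every Sylow subgroup of $H$ is cyclic, then $H$ is metacyclic (a $Z$-group), so both $H'$ and $H/H'$ are cyclic and $h(H)\le 2$. The only deviation the hypothesis allows from the all-cyclic situation is a quaternion Sylow $2$-subgroup, which can occur only when $p$ is odd; here I would split off the characteristic odd part $O_{2'}(H)$, which is a metacyclic $Z$-group, and control $H/O_{2'}(H)$ through the automorphisms of a generalized quaternion group, which form a $2$-group apart from the exceptional $\mathrm{Aut}(Q_8)$. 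Transporting this metacyclic-type structure of the Hall $p'$-subgroup to the $p'$-sections of the upper $p$-series should pin $l_{p'}(G)$ down to at most $2$; this is the easier half.

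The main obstacle is the inequality $l_p(G)\le 2$, precisely because the hypothesis imposes \emph{no} restriction on the Sylow $p$-subgroup, so the standard Hall--Higman bounds, which read $l_p(G)$ off the $p$-Sylow, are unavailable. All the control must come from the $p'$-side: a $p$-length exceeding $2$ would force a non-collapsing alternating section in which $p$-groups and $p'$-groups act faithfully on one another through at least three $p$-stages, and the intermediate $p'$-layers are nilpotent groups with cyclic or quaternion Sylow subgroups. I would argue that such metacyclic-type $p'$-layers are too small to transmit a faithful coprime action through more than two $p$-stages, exploiting the coprime-action facts of Lemma \ref{cc}, the fact that a cyclic or generalized quaternion group contains a unique subgroup of prime order, and a Hall--Higman-style analysis of the relevant modules. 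Establishing this transmission bound is where I expect the real work to lie; granting it, the combination $h(G)\le l_p(G)+l_{p'}(G)\le 2+2=4$ finishes the proof.
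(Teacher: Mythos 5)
First, a remark on the comparison you were asked for: the paper does not actually prove this lemma --- it is quoted verbatim from \cite{acn} --- so the only thing to assess is whether your argument stands on its own. It does not, for two reasons, one of which is fatal to the whole strategy.

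Your opening inequality $h(G)\le l_p(G)+l_{p'}(G)$ is false, because the $p'$-factors of the upper $p$-series are merely $p'$-groups, not nilpotent groups; "a $p$-group or a $p'$-group, hence nilpotent" is a non sequitur. A concrete counterexample satisfying the hypotheses of the lemma: let $p\ge 5$, let $T$ be the binary octahedral group $2.S_4$ of order $48$ (Sylow $2$-subgroup the generalized quaternion group $Q_{16}$, Sylow $3$-subgroup cyclic), and let $G=V\rtimes T$ with $V$ a faithful $\mathbb F_p T$-module. Then $O_{p'}(G)=1$, $O_{p',p}(G)=V$ and $G/V\cong T$ is a $p'$-group, so $l_p(G)=l_{p'}(G)=1$; yet $h(G)=1+h(T)=1+3=4$, since $F(T)=Q_8$ and $T/Q_8\cong S_3$ has Fitting height $2$. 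So even if you proved both of your target bounds $l_p(G)\le2$ and $l_{p'}(G)\le2$, the conclusion $h(G)\le4$ would not follow: the Fitting height can be concentrated inside a single $p'$-factor of the upper $p$-series, which is exactly the contribution your accounting omits. The correct bookkeeping is $h(G)\le l_p(G)+\sum h(\text{$p'$-factors})$, and a $p'$-section of $G$ (all of whose Sylow subgroups are cyclic or generalized quaternion) can itself have Fitting height $3$, as $2.S_4$ shows. For the same reason your discussion of $l_{p'}(G)$ conflates the number of $p'$-layers with the internal (metacyclic-type) structure of a Hall $p'$-subgroup; the latter is what actually matters, and it gives height up to $3$, not $1$.

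The second problem is that the step you yourself identify as carrying all the weight --- the bound $l_p(G)\le2$, for which no Hall--Higman theorem is available because the Sylow $p$-subgroup is unrestricted --- is not proved but only described ("metacyclic-type $p'$-layers are too small to transmit a faithful coprime action through more than two $p$-stages"). That is a plausible heuristic, not an argument, and it is precisely the nontrivial content one would need to supply. As it stands the proposal consists of a false reduction plus an unproved key claim, so it does not establish the lemma; a correct proof has to control the Fitting height of the $p'$-sections themselves (using the classification of soluble groups with cyclic, respectively generalized quaternion, Sylow subgroups) together with a genuine bound on the number of alternations, rather than the additive shortcut attempted here.
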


\begin{lemma}\label{2244} Assume the hypotheses of Theorem \ref{main2} and suppose that $G$ is not torsion. If $H$ is a prosoluble subgroup of $G$, then $h(H)\leq4$.
\end{lemma}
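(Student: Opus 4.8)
The plan is to reduce to the finite soluble case and invoke Lemma \ref{44}. Since the Fitting height of a prosoluble group is the supremum of the Fitting heights of its finite continuous quotients $H/N$, and since a quotient of a Fitting series is again a Fitting series (so $h(H/N)$ never exceeds $h(H)$), it suffices to exhibit a prime $p$ such that, along a cofinal family of open normal subgroups $N$, every Sylow $q$-subgroup of $H/N$ with $q\neq p$ is cyclic or generalized quaternion; Lemma \ref{44} then bounds each $h(H/N)$ by $4$. For odd $q$ the image in $H/N$ of a procyclic pro-$q$ Sylow subgroup of $H$ is cyclic, while for $q=2$ one only has to intersect $N$ trivially with the (finite) Sylow $2$-subgroup. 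Thus the whole statement reduces to the claim that there is a prime $p$ such that for every prime $q\neq p$ the Sylow $q$-subgroups of $H$ are finite, cyclic or generalized quaternion.

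Since $G$ is not torsion it contains a subgroup $A\cong\Bbb Z_p$, and I take this $p$ as the exceptional prime. The cleanest route to the claim is through Lemma \ref{infpq3}. Given a prime $q\neq p$, let $Q$ be a Sylow $q$-subgroup of $H$. Because $H$ is prosoluble, $Q$ normalises some Sylow $p$-subgroup $P_H$ of $H$, so that $Q\leq N_G(P_H)$. If $P_H$ is non-torsion, then $P_H$ is a non-torsion pro-$p$ subgroup of $G$, and Lemma \ref{infpq3} applies directly to $N_G(P_H)$, giving that $Q$ is finite, cyclic or generalized quaternion, as required.

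The remaining, and main, difficulty is the case in which the Sylow $p$-subgroups of $H$ are torsion — in particular when $H$ itself is torsion — since then no non-torsion pro-$p$ subgroup is available inside $H$, and the non-torsion witnesses for $A\cong\Bbb Z_p$ must be imported from the ambient group $G$. Here I would argue by contradiction: if some Sylow $q$-subgroup of $H$ were not finite cyclic or generalized quaternion it would contain a subgroup $B\cong C_q\times C_q$. The key observation is that for any nontrivial $q$-element $b$ with $q\neq p$, every element of infinite order centralising $b$ is forced to be a $q$-element; indeed an infinite-order $c\in C_G(b)$ has prime power order, say $c$ is an $r$-element, whence $C_G(c)$ is pro-$r$ and contains $b$, so $r=q$. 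Consequently, for any $B$-invariant pro-$q'$ subgroup $M$ of $G$ the centralisers $C_M(b)$ with $b\in B^{\#}$ are torsion, and Lemma \ref{qqq} forces $M$ to be torsion.

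It therefore remains to produce a non-torsion $B$-invariant pro-$q'$ subgroup of $G$, which would contradict the conclusion of the previous paragraph. This is the step I expect to be the main obstacle: using that $G$ is not torsion together with a coprime Frattini argument, I would locate a $B$-invariant Sylow $p$-subgroup of $G$, which is non-torsion because all Sylow $p$-subgroups of $G$ are conjugate to one containing $A$, and which, being a pro-$q'$ group, furnishes the desired contradiction. Once the Sylow claim is established, Lemma \ref{44} applied to the cofinal quotients $H/N$ yields $h(H/N)\le4$, and hence $h(H)\le4$.
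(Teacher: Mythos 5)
Your reduction attacks a harder statement than the lemma requires, and the two steps on which it rests do not hold up. First, the claim ``because $H$ is prosoluble, $Q$ normalises some Sylow $p$-subgroup $P_H$ of $H$'' is false: already in $S_4$ a Sylow $3$-subgroup normalises no Sylow $2$-subgroup (each Sylow $2$-subgroup is self-normalising). A Sylow basis of a prosoluble group consists of pairwise \emph{permutable} Sylow subgroups, which is much weaker than normalisation, so even your ``easy'' case does not place $Q$ inside $N_G(P_H)$ and Lemma \ref{infpq3} cannot be invoked there. Second, the step you yourself flag as the main obstacle is a genuine gap: to refute a subgroup $B\cong C_q\times C_q$ of $H$ you need a non-torsion $B$-invariant pro-$q'$ subgroup of $G$, but the coprime machinery of Lemma \ref{cc}(v) is unavailable because $B$ sits inside $G$ and $q\in\pi(G)$, so the action of $B$ on $G$ (or on any relevant normal subgroup) is not coprime; and, as the $S_4$ example shows, there is no reason for a $q$-subgroup to normalise any Sylow $p$-subgroup of $G$. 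Your paragraph deducing torsion of a $B$-invariant pro-$q'$ subgroup from Lemma \ref{qqq} is correct (it is exactly the second step of the proof of Lemma \ref{infpq3}), but without the invariant subgroup it has nothing to act on.

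The paper's proof avoids both difficulties by never trying to control the Sylow structure of $H$ itself. Since $h(H)$ is detected on finite quotients, one fixes an open normal $N$ with $h(NH/N)\geq 5$ and is then free to replace $H$ by any subgroup with the same image modulo $N$. Taking $P_0=P\cap N$ for a Sylow $p$-subgroup $P$ of $G$ containing an element of infinite order, $P_0$ is a non-torsion Sylow $p$-subgroup of $N$, the Frattini argument gives $G=N\,N_G(P_0)$, and Lemma \ref{infpq3} \emph{does} apply to $K=N_G(P_0)$, forcing its Sylow $r$-subgroups ($r\neq p$) to be cyclic or generalized quaternion. One then chooses a supplement $J\leq K$ with $NJ=NH$ and $J\cap N$ pronilpotent (so $J$ is prosoluble) and applies Lemma \ref{44} to the finite images of $J$, contradicting $h(NJ/N)\geq 5$. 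If you want to rescue your outline, you should adopt this replacement trick rather than trying to prove that the Sylow $q$-subgroups of an arbitrary prosoluble $H$ are cyclic or quaternion, which is not accessible at this stage of the argument.
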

\begin{proof} Suppose that the lemma is false and $G$ has a prosoluble subgroup $H$ for which the inequality $h(H)\leq4$ fails. Choose in $G$ an open normal subgroup $N$ such that $h(NH/N)\geq5$. Because $G$ is not torsion it follows that $G$ contains a $p$-element of infinite order. Choose a Sylow $p$-subgroup $P$ in $G$. Write $P_0=P\cap N$ and $K=N_G(P_0)$. In view of the Frattini argument, $G=NK$. Since $N$ is open, $P_0$ is not torsion. It follows from Lemma \ref{infpq3} that for each $r\neq p$ the Sylow $r$-subgroups of $K$ either are cyclic or (generalized) quaternion.

Next, choose in $K$ a subgroup $J$ with the property that $NJ=NH$. In view of \cite[Lemma 2.8.15]{rz} the subgroup $J$ can be chosen in such a way that $J\cap N$ is pronilpotent. Thus, $J$ is a prosoluble subgroup such that for each $r\neq p$ the Sylow $r$-subgroups of $J$ either are cyclic or (generalized) quaternion. Lemma \ref{44} tells us that every finite image of $J$ has Fitting height at most 4. On the other hand, the image of $J$ in $G/N$ has Fitting height at least 5. This is a contradiction.
\end{proof}

Denote by $\mathfrak X$ the class of all finite groups whose soluble subgroups are of Fitting height at most 4. The class $\mathfrak X$ is closed under taking quotients. Combining this fact with Lemma \ref{2244} we deduce that if $G$ is as in Theorem \ref{main2} and if $G$ is not torsion, then $G$  is a pro-$\mathfrak X$ group (cf. Lemma 3.8 in \cite{ppz}).
 
Recall that the nonsoluble length $\lambda(G)$ of a finite group $G$ is defined as the minimum number of nonsoluble factors in a normal series each of whose factors is either soluble or a nonempty direct product of non-abelian simple groups. It was shown in \cite[Cor.~1.2]{KhSh:15} that the nonsoluble length of a finite group $G$ does not exceed the maximum Fitting height of soluble subgroups of $G$. It follows that for any group $G$ in $\mathfrak X$ we have $\lambda(G)\leq4$. Therefore each group $G$ in $\mathfrak X$ has a characteristic series of length at most 24 each of whose factors is either nilpotent or a direct product of non-abelian simple groups. More precisely, each group $G$ in $\mathfrak X$ has a characteristic series
\[1=G_0\leq G_1\leq\dots\leq G_{24}=G\]
such that the factors $G_5/G_4$, $G_{10}/G_{9}$, $G_{15}/G_{14}$, $G_{20}/G_{19}$ are direct product of non-abelian simple groups while the other factors are nilpotent. We apply results of Wilson \cite[Lemma~2 and Lemma~3]{wilson} and deduce that any pro-$\mathfrak X$ group has a characteristic series of length at most 24 each of whose factors is either pronilpotent or a Cartesian product of non-abelian simple groups. Thus, we have proved the following result.

\begin{lemma}\label{series} Assume the hypotheses of Theorem \ref{main2} and suppose that $G$ is not torsion. Then $G$ has a characteristic series of length at most $24$ each of whose factors is either pronilpotent or a Cartesian product of non-abelian finite simple groups.
\end{lemma}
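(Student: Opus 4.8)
The plan is to assemble Lemma \ref{series} from the structural results already developed in this section together with the two transfer theorems of Wilson cited in the text. The essential input is twofold: first, the finite-group structure theory established via Lemmas \ref{2244} (bounding the Fitting height of prosoluble subgroups by $4$) and the classification-based bound on nonsoluble length from \cite{KhSh:15}; and second, the passage from finite to profinite via the lemmas of Wilson. I would organize the argument as a clean deduction rather than a fresh calculation, since the hard analytic work has been done in the preceding lemmas.

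First I would record that, since $G$ is not torsion, Lemma \ref{2244} applies and shows $h(H)\le 4$ for every prosoluble (equivalently, finite soluble) subgroup $H$ of $G$. Consequently every finite continuous quotient of $G$ lies in the class $\mathfrak X$ of finite groups whose soluble subgroups have Fitting height at most $4$. The key observation is that $\mathfrak X$ is closed under quotients, so $G$ is a pro-$\mathfrak X$ group; this is exactly the point flagged in the text (cf. Lemma 3.8 in \cite{ppz}). Next I would invoke the bound $\lambda(G)\le 4$ on nonsoluble length for every $G\in\mathfrak X$, which follows from \cite[Cor.~1.2]{KhSh:15}. From this I extract, for each finite $\mathfrak X$-group, an explicit characteristic series
\[1=G_0\le G_1\le\dots\le G_{24}=G\]
of length $24$ in which the four factors $G_5/G_4$, $G_{10}/G_9$, $G_{15}/G_{14}$, $G_{20}/G_{19}$ are direct products of nonabelian simple groups and the remaining twenty factors are nilpotent. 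The numerology here is routine: a nonsoluble length of at most $4$ forces at most four ``simple'' layers, each surrounded by soluble (hence nilpotent-by-nilpotent-by-$\dots$) padding whose Fitting height is controlled, and counting the layers gives $24$.

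Finally I would transfer this finite statement to the profinite setting using Wilson's \cite[Lemma~2 and Lemma~3]{wilson}, which are precisely the tools that let one build a compatible system of characteristic series across the inverse system of finite quotients and pass to the limit. This yields a characteristic series of length at most $24$ in $G$ whose factors are, respectively, pronilpotent (in place of the nilpotent finite factors) or Cartesian products of nonabelian finite simple groups (in place of the direct-product factors), which is exactly the assertion of Lemma \ref{series}.

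The main obstacle, such as it is, lies in the last step: verifying that the characteristic series of the finite quotients can be chosen coherently so that Wilson's lemmas apply and the inverse limit produces genuine characteristic closed subgroups with factors of the stated two types. The finite structure theory and the numerical bound of $24$ are essentially bookkeeping once Lemma \ref{2244} and \cite[Cor.~1.2]{KhSh:15} are in hand; the only delicate point is ensuring that passing to the projective limit does not destroy the characteristic nature of the series or blur the distinction between the pronilpotent and Cartesian-product factors. Since Wilson's lemmas were designed for exactly this kind of transfer (they underlie Theorem \ref{wilson} itself), I expect this to go through cleanly, and the proof reduces to citing them.
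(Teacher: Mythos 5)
Your proposal matches the paper's own argument essentially step for step: defining the class $\mathfrak X$, deducing from Lemma \ref{2244} that $G$ is a pro-$\mathfrak X$ group, invoking \cite[Cor.~1.2]{KhSh:15} to bound the nonsoluble length by $4$ and hence obtain the $24$-term characteristic series for finite $\mathfrak X$-groups, and transferring to the profinite setting via \cite[Lemma~2 and Lemma~3]{wilson}. The approach and all the key citations coincide with the paper's, so the proposal is correct as written.
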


We are now ready to complete the proof of Theorem \ref{main2}.

\begin{proof}[Proof of Theorem \ref{main2}] Recall that $G$ is a profinite group in which each element has either finite or prime power (possibly infinite) order. In other words, whenever $x$ is an element of infinite order in $G$ there is a prime $p$ such that $C_G(x)$ is a pro-$p$ group. Assume that $G$ is not torsion. We need to show that $G$ is virtually pro-$p$ for some prime $p$.

Lemma \ref{series} tells us that $G$ has a finite series of characteristic subgroups all of whose quotients are either pronilpotent or Cartesian products of finite simple groups. We use induction on the length of that series. If the length is 1, the result follows from Lemmas \ref{nilp2} and \ref{cart2}. Assume that the length is at least 2. Thus, $G$ has a normal subgroup $N$ such that both $N$ and $G/N$ are either torsion or virtually pro-$p$, possibly for different primes $p$.

If both $N$ and $G/N$ are torsion, then $G$ is torsion, contrary to the assumptions.  

Suppose that $N$ is not torsion and thus $N$ is virtually pro-$p$ for some prime $p$. Set $P=O_p(N)$. Lemma \ref{infpq3} tells us  that for each $q\neq p$ the pro-$q$ subgroups of $G$ are finite, either cyclic or (generalized) quaternion. It follows that if $G/N$ is not torsion, then $G/N$ is virtually pro-$p$ for the same prime $p$. In this case $G$ is virtually pro-$p$ and we are done. If $G/N$ is torsion, we invoke Herfort's theorem \cite{herfort} on finiteness of the set of prime divisors of the order of a compact torsion group. Combining that theorem with the fact that for each $q\neq p$ the pro-$q$ subgroups of $G$ are finite we again conclude that $G/N$ is virtually pro-$p$ whence $G$ is virtually pro-$p$, as required.

Thus, we need to deal with the case where $N$ is torsion while $G/N$ is non-torsion, virtually pro-$q$. Wilson's Theorem \ref{wilson} tells us that $N$ has a finite series of characteristic subgroups $$N=N_1\geq N_2\geq\dots\geq N_s\geq N_{s+1}=1 $$ in which each factor either is a pro-$p$ group for some prime $p$ or is isomorphic (as a topological group) to a Cartesian product of isomorphic finite simple groups. We now use induction on the length $s$ of the above series and assume $G/N_s$ is non-torsion, virtually pro-$q$. Thus, without loss of generality we can assume that $N$ either is a pro-$p$ group for some prime $p$ or is a Cartesian product of isomorphic finite simple groups. If $N$ is pro-$q$, then $G$ is virtually pro-$q$ and we are done. 

Thus, assume that $N$ is not a pro-$q$ group and choose a subgroup $A\leq G$ isomorphic to $\Bbb Z_q$. If $A$ normalizes a nontrivial pro-$q'$ subgroup $B$, observe that $C_B(a)=1$ for each $a\in A^\#$. Thus, in view of Lemma \ref{resh} we obtain a contradiction. Therefore $N$ cannot be a pro-$p$ group and so $N$ is a Cartesian product of isomorphic finite simple groups. Note that in the natural action of $A$ on $N$ the group $A$ permutes the simple factors of $N$. Let $S\leq N$ be a simple factor. If the stabilizer of $S$ in $A$ is nontrivial, then we see that a subgroup of finite index in $A$ centralizes $S$ and, since $S$ contains $q'$-elements, we obtain elements of infinite order divisible by more than one prime. This is a contradiction so we assume that the stabilizer of $S$ in $A$ is trivial. Therefore the direct simple factors $S^a$ and $S^b$ are distinct whenever $a$ and $b$ are distinct elements of $A$. Choose a $q'$-element $x\in S$. It follows that the subgroup $\langle x^a;\ a\in A^\#\rangle$, that is, the subgroup generated by all $A$-conjugates of $x$ is an abelian pro-$q'$ subgroup normalized by $A$. We have already seen that the existence of such a subgroup leads to a contradiction. The proof is now complete. 
\end{proof}

\end{document}